\definecolor{refkey}{gray}{0.8}
\definecolor{labelkey}{gray}{0.8}
\newtheorem{definition}{Definition}
\newcommand{\remove}[1]{{}}
\newcommand{\cut}[1]{}
\newcommand{\ba}{\left[ \begin{array}}
	\newcommand{\ea}{\\ \end{array} \right]}
\newcommand{\qd}{\hfill{$\blacksquare$}}
\newcommand{\define}{\;\stackrel{\Delta}{=}\;}
\newcommand{\eq}[1]{\begin{align}#1\end{align}}
\def\tran{^{\mathsf{T}}}
\def\qed{\mbox{\rule[0pt]{1.3ex}{1.3ex}}}
\newcommand{\sw}{{\scriptstyle{\mathcal{W}}}}
\newcommand{\sx}{{\scriptstyle{\mathcal{X}}}}
\newcommand{\sy}{{\scriptstyle{\mathcal{Y}}}}
\newcommand{\sz}{{\scriptstyle{\mathcal{Z}}}}
\newcommand{\twd}{\widetilde{\scriptstyle{\mathcal{W}}}}
\newcommand{\tcA}{\overline{{\mathcal{A}}}}
\newcommand{\tA}{\overline{A}}
\newcommand{\nnb}{\nonumber \\}
\newcommand{\cA}{{\mathcal{A}}}
\newcommand{\cC}{{\mathcal{C}}}
\newcommand{\cD}{{\mathcal{D}}}
\newcommand{\cF}{{\mathcal{F}}}
\newcommand{\cG}{{\mathcal{G}}}
\newcommand{\cH}{{\mathcal{H}}}
\newcommand{\cI}{{\mathcal{I}}}
\newcommand{\cJ}{{\mathcal{J}}}
\newcommand{\cL}{{\mathcal{L}}}
\newcommand{\cM}{{\mathcal{M}}}
\newcommand{\cN}{{\mathcal{N}}}
\newcommand{\cP}{{\mathcal{P}}}
\newcommand{\cU}{{\mathcal{U}}}
\newcommand{\cV}{{\mathcal{V}}}
\newcommand{\RR}{\mathbb{R}}
\newcommand{\diag}{{\mathrm{diag}}} 
\newcommand{\col}{{\mathrm{col}}} 
\newcommand{\grad}{{\nabla}}    
\DeclareMathOperator*{\argmin}{arg\,min}
\newcommand{\bc}{\begin{center}}
	\newcommand{\ec}{\end{center}}
\newcommand{\bdm}{\begin{displaymath}}
	\newcommand{\edm}{\end{displaymath}}
\newcommand{\beq}{\begin{equation}}
	\newcommand{\eeq}{\end{equation}}
\newcommand{\bfl}{\begin{flushleft}}
	\newcommand{\efl}{\end{flushleft}}
\newcommand{\bt}{\begin{tabbing}}
	\newcommand{\et}{\end{tabbing}}
\newcommand{\beqn}{\begin{align}}
	\newcommand{\eeqn}{\end{align}}
\newcommand{\beqs}{\begin{align*}} 
	\newcommand{\eeqs}{\end{align*}}  
\newcommand{\HRule}{\rule{\linewidth}{0.5mm}}
\newtheorem{lemma}{{Lemma}}
\newtheorem{corollary}{{Corollary}}
\def\tran{^{\mathsf{T}}}
\def\real{{\mathbb{R}}}
\def\Zint{{\mathchoice{\setbox1=\hbox{\sf Z}\copy1\kern-.75\wd1\box1}
{\setbox1=\hbox{\sf Z}\copy1\kern-.75\wd1\box1}
{\setbox1=\hbox{\scriptsize\sf Z}\copy1\kern-.75\wd1\box1}
{\setbox1=\hbox{\scriptsize\sf Z}\copy1\kern-.75\wd1\box1}}}
\def\hlinewd#1{%
  \noalign{\ifnum0=`}\fi\hrule \@height #1 \futurelet
   \reserved@a\@xhline}
\title{Exact Diffusion for Distributed Optimization and Learning --- Part I: Algorithm Development}
\author{\IEEEauthorblockN{Kun Yuan, Bicheng Ying, Xiaochuan Zhao,  and Ali H. Sayed \\}
	\vspace{0.4cm}
	
	\thanks{\scriptsize{K. Yuan and B. Ying are with the Department of Electrical Engineering, University of California, Los Angeles, CA 90095 USA. Email:\{kunyuan, ybc, xiaochuanzhao\}@ucla.edu. X. Zhao is now with Goldman Sachs, NY. A. H. Sayed is with the School of Engineering, Ecole Polytechnique Federale de Lausanne (EPFL), Switzerland. Email: ali.sayed@epfl.ch. This  work was supported in part by NSF grants CCF-1524250 and ECCS-1407712. {A short conference version of the results from Parts I and II appear in the short conference publication \cite{yuan2017eusipco}.}}}
}
\begin{document}
%
\maketitle
\small
\begin{abstract}
This work develops a distributed optimization strategy with guaranteed exact convergence for a broad class of left-stochastic combination policies. The resulting exact diffusion strategy is shown in Part II \cite{yuan2017exact2} to have a wider stability range and superior convergence performance than the  EXTRA strategy. The exact diffusion solution is applicable to non-symmetric  left-stochastic combination matrices, while many earlier developments on exact consensus implementations are limited to doubly-stochastic matrices; these latter matrices impose stringent constraints on the network topology. The derivation of the exact diffusion strategy in this work relies on reformulating the aggregate optimization problem as a penalized problem and resorting to a diagonally-weighted incremental construction. Detailed stability and convergence analyses are pursued in Part II \cite{yuan2017exact2} and are facilitated by examining the evolution of the error dynamics in a transformed domain. Numerical simulations illustrate the theoretical conclusions.
\end{abstract}
\begin{keywords}
distributed optimization, diffusion, consensus, exact convergence, left-stochastic matrix, doubly-stochastic matrix, balanced policy, Perron vector.
\end{keywords}

\section{Introduction and Motivation}
\setlength\abovedisplayskip{3pt}
\setlength\belowdisplayskip{3pt}
\setlength\abovedisplayshortskip{3pt}
\setlength\belowdisplayshortskip{3pt}


This work deals with {\em deterministic} optimization problems where a collection of $N$ networked agents operate cooperatively to solve an  aggregate optimization problem of the form:
\eq{
	\label{prob-consensus}
w^o = \argmin_{w\in \RR^M}\quad \cJ^o(w)=\sum_{k=1}^{N} J_k(w).
}
In this formulation, each risk function $J_k(w)$ is convex and differentiable, while the aggregate cost $\cJ^o(w)$ is strongly-convex. {\color{black}Throughout the paper, we assume the network is undirected.} All agents seek to determine the unique global minimizer, $w^o$, under the constraint that agents can only communicate with their neighbors. This distributed approach is robust to failure of links and/or agents and scalable to the network size. Optimization problems of this type find applications in a wide range of areas including  wireless sensor networks \cite{estrin2001instrumenting,rossi2004distributed,li2002detection,akyildiz2002survey}, multi-vehicle and multi-robot control systems \cite{ren2007information,zhou2011multirobot}, cyber-physical systems and smart grid implementations \cite{amin2005toward,ibars2010distributed,kim2011cloud,giannakis2013monitoring}, distributed adaptation and estimation \cite{sayed2014adaptive,sayed2014adaptation,chen2012diffusion,chen2015learning,chen2015learning2}, distributed statistical learning \cite{duchi2012dual,chen2015dictionary,chouvardas2012sparsity} and clustering \cite{zhao2015distributed,chen2015diffusion}.

There are several classes of distributed algorithms that can be used to solve problem \eqref{prob-consensus}. In the primal domain, implementations that are based on gradient-descent methods  are effective and easy to implement. There are at least two prominent variants under this class: the consensus strategy \cite{nedic2009distributed,dimakis2010gossip,kar2011convergence,kar2012distributed,yuan2016convergence, olfati2005consensus,sardellitti2010fast,braca2008running} and the diffusion strategy \cite{sayed2014adaptive,sayed2014adaptation,chen2012diffusion,chen2015learning,chen2015learning2}. There is a subtle but critical difference in the order in which computations are performed under these two strategies. In the consensus implementation, each agent runs a gradient-descent type iteration, albeit one where the starting point for the recursion and the point at which the gradient is approximated are {\em not} identical. This construction introduces an asymmetry into the update relation, which has some undesirable instability consequences {\color{black}(described, for example, in Secs. 7.2--7.3, Example 8.4, and also in Theorem 9.3 of \cite{sayed2014adaptation} and Sec. V.B and Example 20 of \cite{sayed2014adaptive})}. The diffusion strategy, in comparison, employs a symmetric update where the starting point for the iteration and the point at which the gradient is approximated coincide. This property results in a wider stability range for diffusion strategies \cite{sayed2014adaptive,sayed2014adaptation}. Still, when sufficiently small step-sizes are employed to drive the optimization process, both types of strategies (consensus and diffusion) are able to converge exponentially fast, albeit only to {\em an approximate solution} \cite{sayed2014adaptation,yuan2016convergence}. Specifically, it is proved in \cite{sayed2014adaptation,chen2013distributed,yuan2016convergence} that both the consensus and diffusion iterates under constant step-size learning converge towards a neighborhood of square-error size $O(\mu^2)$ around the true optimizer, $w^o$, i.e., $\|\widetilde{w}_{k,i}\|^2=O(\mu^2)$ as $i\rightarrow\infty$, where $\mu$ denotes the step-size and $\widetilde{w}_{k,i}$ denotes the error at agent $k$ and iteration $i$ relative to $w^o$. Since we are dealing with {\em deterministic} optimization problems, this small limiting bias is not due to any gradient noise arising from stochastic approximations; it is instead due to the inherent structure of the consensus and diffusion updates as clarified in the sequel.

Another important family of distributed algorithms are those based on the distributed alternating direction method of multipliers (ADMM) \cite{mateos2010distributed,mota2013d,shi2014linear} and its variants \cite{ling2015dlm,chang2015multi,mokhtari2015dqm}. These methods treat problem \eqref{prob-consensus} in both the primal and dual domains. It is shown in \cite{shi2014linear} that distributed ADMM with constant parameters will converge exponentially fast to the exact global solution $w^o$. However, distributed ADMM
solutions are computationally more expensive since they necessitate the solution of optimal sub-problems at each iteration. Some useful variations of distributed ADMM \cite{ling2015dlm,chang2015multi,mokhtari2015dqm} may alleviate the computational burden, but their recursions are still more difficult to implement than consensus or diffusion.

In more recent work \cite{shi2015extra}, a modified implementation of consensus iterations, referred to as EXTRA, is proposed and shown to converge to the {\em exact} minimizer $w^o$ rather than to an $O(\mu^2)-$neighborhood around $w^o$. The modification has a similar computational burden as traditional consensus and is based on adding a step that combines two prior iterates to remove bias. Motivated by \cite{shi2015extra}, other variations with similar properties were proposed in \cite{lorenzo2016next, nedich2016achieving, qu2017harnessing, xu2015augmented, nedic2016geometrically}. These variations rely instead on combining inexact gradient evaluations with a gradient tracking technique. The resulting algorithms, compared to EXTRA, have two information combinations per recursion, {\color{black}which doubles the amount of communication variables compared to EXTRA,} and can become a burden when communication resources are limited.



The current work is motivated by the following considerations. The result in \cite{shi2015extra} shows that the EXTRA technique resolves the bias problem in consensus implementations. However, it is known that traditional diffusion strategies outperform traditional consensus strategies. Would it be possible then to correct the bias in the diffusion implementation and attain an algorithm that is superior to EXTRA (e.g., an implementation that is more stable than EXTRA)? This is one of the contributions in this two-part work; Parts I and II\cite{yuan2017exact2}. In this part, we shall indeed develop a bias-free diffusion strategy that will be shown in Part II\cite{yuan2017exact2} to have a wider stability range than EXTRA consensus implementations. Achieving these objectives is challenging for several reasons. First, we need to understand the origin of the bias in diffusion implementations. Compared to the consensus strategy, the source of this bias is different and still not well understood. In seeking an answer to this  question, we will initially observe that the diffusion recursion can be framed as {an incremental algorithm} to solve a penalized version of \eqref{prob-consensus} and not \eqref{prob-consensus} directly --- see expression \eqref{prob-penalty-2} further ahead. In other words, the local diffusion estimate $w_{k,i}$, held by agent $k$ at iteration $i$, will be shown to approach the solution of a penalized problem rather than $w^o$, which causes the bias.


\subsection{{\color{black}Contributions}}
We have {three} main contributions in this article {and the accompanying Part II \cite{yuan2017exact2}} relating to: (a) developing a distributed algorithm that ensures exact convergence based on the diffusion strategy; (b) developing a strategy with wider stability range and enhanced performance than EXTRA consensus; and (c) developing a strategy with these properties for the larger class of local balanced (rather than only doubly-stochastic) matrices. 

To begin with, we will show in this article how to modify the diffusion strategy such that it solves the real problem \eqref{prob-consensus} directly. We shall refer to this variant as {\em exact diffusion}. Interestingly, the structure of {\em exact diffusion} will turn out to be very close to the structure of {\em standard} diffusion. The only difference is that there will be an extra ``correction'' step added between the usual ``adaptation'' and ``combination'' steps of diffusion --- see the listing of Algorithm 1 further ahead. It will become clear that this adapt-correct-combine (ACC) structure of the exact diffusion algorithm is {\color{black}more symmetric} in comparison to the EXTRA recursions. In addition, the computational cost of the ``correction'' step is trivial. Therefore, with essentially the same computational efficiency as standard diffusion, the exact diffusion algorithm will be able to converge {\em exponentially fast} to $w^o$ without any bias. Secondly, we will show in Part II\cite{yuan2017exact2} that exact diffusion has a wider stability range than EXTRA. In other words, there will exist a larger range of step-sizes that keeps exact diffusion stable but not the EXTRA algorithm. This is an important observation because larger values for $\mu$ help accelerate convergence.

Our third contribution is that we will derive the exact diffusion algorithm, and establish these desirable properties {\color{black}for the class of {\em locally balanced} combination matrices. This class does not only include symmetric doubly-stochastic matrices as special cases, but it also includes a range of widely-used left-stochastic policies as explained further ahead. First, we recall that  left-stochastic matrices are defined as follows.} Let $a_{\ell k}$ denote the weight that is used to scale the data that flows from agent $\ell$ to $k$. Let $A\define [a_{\ell k}]\in \RR^{N\times N}$ denote the matrix that collects all these coefficients. The entries on each column of $A$ are assumed to add up to one so that $A$ is {\em left-stochastic}, i.e., it holds that
\eq{\label{weight}
A\tran \mathds{1}_N = \mathds{1}_N,\quad \mbox{ or }\quad \sum_{\ell=1}^{N} a_{\ell k} = 1,\ \forall\, k=1,\cdots, N.
}
The matrix $A$ will not be required to be symmetric. For example, it may happen that $a_{\ell k}\neq a_{k \ell}$. 
Using these coefficients, when an agent $k$ combines the iterates $ \{\psi_{\ell,i}\}$ it receives from its
neighbors, that combination will correspond to a calculation of the form:
\eq{\label{weighted-average}
w_{k,i+1} = \sum_{\ell=1}^{N} a_{\ell k} \psi_{\ell, i}, \quad \mbox{where} \quad \sum_{\ell=1}^{N} a_{\ell k} = 1.
}

It should be emphasized that condition \eqref{weight}, which is repeated in \eqref{weighted-average}, is different from all previous algorithms studied in \cite{nedic2009distributed,mateos2010distributed,mota2013d,shi2014linear,chang2015multi,shi2015extra,xu2015augmented,nedic2016geometrically}, which require $A$ to be symmetric and doubly stochastic (i.e., each of its columns and rows should add up to one). {\color{black}Although symmetric doubly-stochastic matrices are common in distributed optimization, policies of great practical value happen to be left-stochastic and not doubly-stochastic. For example, it is shown in Chapters 12 and 15 of \cite{sayed2014adaptation} that the Hastings rule (see \eqref{a-lk}) and the relative-degree rule (see \eqref{xch}) achieve better mean-square-error (MSE) performance over adaptive networks than doubly-stochastic policies. Both of these rules are left-stochastic. Also, as we explain in Sec. \ref{sec-average-vs-ds}, the averaging rule (see \eqref{a-lk-ave}) leads to faster convergence in highly unbalanced networks where the degrees of neighboring nodes differ drastically. This rule is again left-stochastic and is rather common in applications involving data analysis over social networks. Furthermore, the averaging rule has better privacy-preserving properties than doubly-stochastic policies since it can be constructed from information available solely at the agent. In contrast, the doubly-stochastic matrices generated, for example, by the maximum-degree rule or Metropolis rule \cite{sayed2014adaptation} will require agents to share their degrees with neighbors.

{\color{black}We further remark that our proposed approach is different from existing algorithms that employ the useful push-sum technique, which requires $A$ to be right (rather than left) stochastic, i.e., $A$ is required to satisfy instead\footnote{\scriptsize Different from this paper, the notation $a_{\ell k}$ in \cite{tsianos2012push,nedic2016stochastic,xi2015linear,zeng2015extrapush,nedich2016achieving} is used to denote the weight that scales the data flowing from agent $k$ to $\ell$ (rather than from $\ell$ to $k$ as in this paper). From this notational viewpoint, the combination matrix $A$ in \cite{tsianos2012push,nedic2016stochastic,xi2015linear,zeng2015extrapush,nedich2016achieving} is left-stochastic rather than right-stochastic. 
	}$A\mathds{1}_N = \mathds{1}_N$. For instance, the push-sum implementations in \cite{tsianos2012push,nedic2016stochastic,xi2015linear, zeng2015extrapush,nedich2016achieving} replace the rightmost condition in \eqref{weighted-average} by
	\eq{\label{non-weighted-average}
		w_{k,i+1} = \sum_{\ell=1}^{N} a_{\ell k} \psi_{\ell, i}, \quad \mbox{where} \quad \sum_{k=1}^{N} a_{\ell k} = 1.
	}
	It will be illustrated in the simulations (later in Fig. 3 of Part II \cite{yuan2017exact2}) that the use of a left-stochastic combination policy and the adapt-then-combine structure in our approach lead to more efficient communications, and also to a stable performance over a wider range of step-sizes than right-stochastic policies used in the push-sum implementations \cite{tsianos2012push,nedic2016stochastic,xi2015linear, zeng2015extrapush,nedich2016achieving}.  However, the difference in the nature of the combination matrix (left vs. right-stochastic) complicates the convergence analysis and requires a completely different convergence analysis approach from \cite{tsianos2012push,nedic2016stochastic,xi2015linear, zeng2015extrapush,nedich2016achieving}. 
	
%
%
%
}

}

{In this Part I we derive the exact diffusion algorithm, while in Part II \cite{yuan2017exact2} we establish its convergence properties and prove its stability superiority over the EXTRA algorithm. This article is organized as follows. In Section \ref{sec-diffusion} we review the standard diffusion algorithm, introduce locally-balanced left-stochastic combination policies, and establish several of their properties. In Section \ref{sec-diffusion-penalty} we identify the source of bias in standard diffusion implementations. In Section \ref{sec-exact-diffusion} we design the exact diffusion algorithm to correct for the bias. In Section V we illustrate the necessity of the locally-balanced condition on the combination policies by showing that divergence can occur if it is not satisfied. Numerical simulations are presented in Section \ref{sec-simulation}.



{\em Notation:} Throughout the paper we use $\diag\{x_1,\cdots,x_N\}$ to denote a diagonal matrix consisting of diagonal entries ${x_1,\cdots,x_N}$,  and use $\col\{x_1,\cdots,x_N\}$ to denote a column vector formed by  stacking ${x_1,\cdots,x_N}$. For symmetric matrices $X$ and $Y$, the notation $X \le Y$ or $Y\ge X$ denotes $Y - X$ is positive semi-definite. For a vector $x$, the notation $x \succeq 0$ denotes that each element of $x$ is non-negative, while the notation $x \succ 0$ denotes that each element of $x$ is positive. For a matrix $X$, we let $\mathrm{range}(X)$ denote its range space, and $\mathrm{null}(X)$ denote its null space. The notation $\mathds{1}_N = \col\{1,\cdots,1\} \in \RR^{N}$.

\section{Diffusion and combination policies}
\label{sec-diffusion}

\subsection{Standard Diffusion Strategy}
To proceed, we will consider a more general optimization problem than \eqref{prob-consensus} by introducing a weighted aggregate cost of the form:
\eq{
	\label{prob-dist}
	w^\star = \argmin_{w\in \RR^M}\quad \cJ^\star(w) = \sum_{k=1}^{N} q_k J_k(w),
}
for some positive coefficients $\{q_k\}$. Problem \eqref{prob-consensus} is a special case when the $q_k$ are uniform, i.e., $q_1=q_2=\ldots=q_N$, in which case $w^{\star}=w^o$. Note also that the aggregate cost ${\cal J}^{\star}(w)$ is strongly-convex when ${\cal J}^{o}(w)$ is strongly-convex. 

To solve problem \eqref{prob-dist} over a {\em connected} network of agents, we consider the standard diffusion strategy \cite{chen2012diffusion,sayed2014adaptive,sayed2014adaptation}:
\eq{
	\psi_{k,i} &= w_{k,i-1} - \mu_k \grad J_k(w_{k,i-1}), \label{d-1}\\
	w_{k,i} &= \sum_{\ell \in \cN_k} a_{\ell k} \psi_{\ell,i}, \label{d-2}
}
where $\{\mu_k\}_{k=1}^N$ are positive step-sizes, and the $\{a_{\ell k}\}_{\ell=1,k=1}^N$ are nonnegative combination weights satisfying
\eq{\label{left-stochastic a-lk}
	\sum_{\ell \in \cN_k} a_{\ell k}=1.
}
Moreover, ${\cN}_k$ denotes the set of neighbors of agent $k$, and $\nabla J_k(\cdot)$ denotes the gradient vector of $J_k$ relative to $w$. It follows from \eqref{left-stochastic a-lk} that $A=[a_{\ell k}]\in\real^{N\times N}$ is a left-stochastic matrix. 
It is assumed that the network graph is connected, meaning that a path with nonzero combination weights can be found linking any pair of agents. It is further assumed that the graph is strongly-connected, which means that at least one diagonal entry of $A$ is non-zero \cite{sayed2014adaptation} (this is a reasonable assumption since it simply requires that at least one agent in the network has some confidence level in its own data). In this case, the matrix $A$ will be primitive. This implies, in view of the Perron-Frobenius theorem \cite{pillai2005perron,sayed2014adaptation}, that there exists an eigenvector $p$ satisfying 
\eq{
	Ap=p,\;\;\;\mathds{1}_N\tran p=1,\;\;p \succ 0.
}
We refer to $p$ as the Perron eigenvector of $A$. Next, we introduce the vector
\eq{
	q\define \mbox{\rm col}\{q_1,q_2,\ldots,q_N\} \in \RR^N,
}
where $q_k$ is the weight associated with $J_k(w)$ in \eqref{prob-dist}. Let {the constant scalar} $\beta$ be chosen such that
\eq{\label{q-A}
	q = \beta \, \mbox{diag}\{\mu_1,\mu_2,\cdots,\mu_N\}p.
}
\vspace{1mm}
\noindent {\bf \hspace{-1.2mm}Remark 1. (Scaling)} Condition \eqref{q-A} is not restrictive and can be satisfied for any left-stochastic matrix $A$ through the choice of the parameter $\beta$ and the step-sizes. Note that $\beta$ should satisfy 
\eq{ \beta =\frac{q_k}{p_k}\frac{1}{\mu_k}}
for all $k$. To make the expression for $\beta$ independent of $k$, we parameterize (select) the step-sizes as 
\eq{\label{step_size_form} \mu_k=\left(\frac{q_k}{p_k}\right) \mu_o}
for some small $\mu_o > 0$. Then, $\beta = 1/\mu_o$, which is independent of $k$, and relation (11) is satisfied. 

\rightline \qed

{\color{black}
\noindent {\bf Rermark 2. (Perron entries)} Expression \eqref{step_size_form} suggests that agent $k$ needs to know the Perron entry $p_k$ in order to run the diffusion strategy \eqref{d-1}--\eqref{d-2}. As we are going to see in the next section, the Perron entries are actually available beforehand and in closed-form for several well-known left-stochastic policies (see, e.g., expressions \eqref{p_k}, \eqref{p_k-ave}, and \eqref{xcnj} further ahead). For other left-stochastic policies for which closed-form expressions for the Perron entries may not be available, these can be determined iteratively by means of the power iteration --- see, e.g., the explanation leading to future expression \eqref{power_iteration_result}. \qd
}

It was shown by Theorem 3 in \cite{chen2013distributed} that under \eqref{q-A}, the iterates $w_{k,i}$ generated through the diffusion recursion \eqref{d-1}-\eqref{d-2} will approach $w^\star$, i.e., 
\eq{\label{diffusion-limsup}
\limsup_{i\to \infty} \|w^\star-w_{k,i}\|^2 = O(\mu_{\max}^2), \ \forall\; k=1,\cdots, N,
}
where $\mu_{\max} = \max\{\mu_1,\cdots, \mu_N\}$. Result \eqref{diffusion-limsup} implies that the diffusion algorithm will converge to a neighborhood around $w^\star$, and that the square-error bias is on the order of $O(\mu_{\max}^2)$.

\subsection{Combination Policy}\label{subsec-A}
Result \eqref{diffusion-limsup} is a reassuring conclusion: it ensures that the squared-error is small whenever $\mu_{\max}$ is small; moreover, the result holds for {\em any} left-stochastic matrix. Moving forward, we will focus on an important subclass of left-stochastic matrices, namely, those that satisfy a mild {\em local balance} condition (we shall refer to these matrices as {\em balanced} left-stochastic policies)\cite{zhao2014learning}. The balancing condition turns out to have a useful physical interpretation and, in addition, it will be shown to be satisfied by several widely used left-stochastic combination policies. The local balance condition will help endow networks  with crucial properties to ensure exact convergence to $w^\star$ without any bias. In this way, we will be able to propose distributed optimization strategies with exact convergence guarantees for this class of left-stochastic matrices, while earlier exact convergence results are limited to (the less practical) right-stochastic or doubly-stochastic policies; these choices face implementation difficulties for the reasons explained before, which is the main motivation for focusing on left-stochastic policies in our treatment. 
%
%
\begin{definition}[\sc Locally balanced Policies]\label{ass-lb}
	Let $p$ denote the Perron eigenvector of a primitive left-stochastic matrix $A$, with entries $\{p_{\ell}\}$. Let $P=\mbox{\rm diag}(p)$ correspond to the diagonal matrix constructed from $p$. The matrix $A$ is said to satisfy a local balance condition if it holds that
	\eq{\label{local-balance}
		a_{\ell k}\, p_k = a_{k \ell}\,p_{\ell}, \quad k,\ell =1,\cdots,N
	}
	or, equivalently, in matrix form:
	\eq{\label{lb-compact}
		PA\tran = AP.
	}
	\rightline \qed
\end{definition}

\noindent Relations of the form \eqref{local-balance} are common in the context of Markov chains. They are used there to model an equilibrium scenario for the probability flux into the Markov states \cite{whittle1968equilibrium,norris1998markov}, where the $\{a_{\ell k}\}$ represent the transition probabilities from states $\ell$ to $k$ and the $\{p_{\ell}\}$ denote the steady-state distribution for the Markov chain. 

We provide here an interpretation for \eqref{local-balance} in the context of multi-agent networks by considering two generic agents, $k$ and $\ell$, from an arbitrary network, as shown in Fig. \ref{fig:local-balance}. The coefficient $a_{\ell k}$ is used by agent $k$ to scale information arriving from agent $\ell$. Therefore, this coefficient reflects the amount of confidence that agent $k$ has in the information arriving from agent $\ell$. Likewise, for $a_{k\ell}$. Since the combination policy is not necessarily symmetric, it will hold in general that $a_{\ell k}\neq a_{k\ell}$. However, agent $k$ can re-scale the incoming weight $a_{\ell k}$ by $p_k$, and likewise for agent $\ell$, so that the local balance condition \eqref{local-balance} requires each pair of rescaled weights to match each other. We can interpret $a_{\ell k}$ to represent the (fractional) amount of information flowing from $\ell$ to $k$ and $p_k$ to represent the price paid by agent $k$ for that information. Expression \eqref{local-balance} is then requiring the information-cost benefit to be equitable across agents.
\begin{figure}[h!]
	\centering
	\includegraphics[scale=0.4]{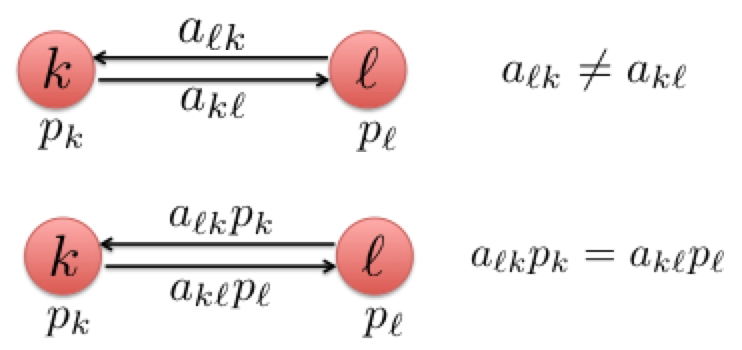}
	\caption{Illustration of the local balance condition \eqref{local-balance}.}
	\label{fig:local-balance}
\end{figure} 

It is worth noting that the local balancing condition \eqref{local-balance} is satisfied by several important left-stochastic policies, as illustrated in four examples below. Thus, let 
$\tau_k=\mu_{k}/\mu_{\max}$ for agent $k$. Then condition \eqref{q-A} becomes
\eq{\label{q-A-2}
	q = \beta \mu_{\max}\, \mbox{diag}\{\tau_1,\tau_2,\cdots,\tau_N\} p,
}
where $\tau_k\in (0, 1]$.

%
%
%
%
%
%

\vspace{1mm}
\noindent \textbf{Policy 1 (Hastings rule)} The first policy we consider is the Hastings rule. Given $\{q_k\}_{k=1}^N$ and $\{\mu_k\}_{k=1}^N$, we select $a_{\ell k}$ as \cite{sayed2014adaptation,hastings1970monte}:
\eq{\label{a-lk}
\hspace{-3mm}	a_{\ell k}=
	\begin{cases}
		\begin{array}{ll}\displaystyle
			\hspace{-2mm}\frac{\mu_k/q_k}{\max\{n_k\mu_k/q_k, n_\ell \mu_\ell /q_\ell \}},& \mbox{if $\ell \in \cN_k \backslash \{k\}$}, \vspace{2mm}\\
			\hspace{-2mm}\displaystyle 1 - \sum_{m\in \cN_k \backslash \{k\}}a_{mk}, & \mbox{if $\ell = k$},\\
			\hspace{-2mm}0,& \mbox{if $\ell \notin \cN_k$}.
		\end{array}
	\end{cases}
}
where $n_k \define |\cN_k|$ (the number of neighbors of agent $k$). It can be verified that $A$ is left-stochastic, and that the entries of its Perron eigenvector $p$ are given by
\eq{\label{p_k}
	p_k \define \frac{q_k/\mu_k}{\sum_{\ell=1}^{N}q_{\ell}/\mu_{\ell}}>0.
}
Let 
\eq{\label{beta-hastings}
	\beta=\sum_{\ell=1}^{N}q_{\ell}/\mu_{\ell} = \frac{1}{\mu_{\max}}\sum_{\ell=1}^{N}q_{\ell}/\tau_{\ell} > 0.
} 
With \eqref{a-lk} and \eqref{p_k}, it is easy to verify that 
\eq{
	a_{\ell k}p_k = \frac{1}{\beta \max\{n_k\mu_k/q_k, n_\ell \mu_\ell /q_\ell \}} = a_{k \ell}p_{\ell}.
}
If $\ell = k$, it is obvious that \eqref{local-balance} holds. If $\ell \notin \cN_k$, then $k \notin \cN_\ell$. In this case, $a_{\ell k}p_k = a_{k \ell}p_{\ell} = 0$. 

Furthermore, we can also verify that when $\{q_k\}_{k=1}^N$ and $\{\mu_k\}_{k=1}^N$ are given, $\{a_{\ell k}\}$ are generated through \eqref{a-lk}, and $\beta$ is chosen as in \eqref{beta-hastings}, then condition \eqref{q-A} is satisfied.

\rightline \qed

\vspace{2mm}
\noindent \textbf{Policy 2 (Averaging rule)} The second policy we consider is the popular average combination rule where $a_{\ell k}$ is chosen as
\eq{
	\label{a-lk-ave}
	a_{\ell k}=
	\begin{cases}
		\begin{array}{ll}
			\displaystyle 1/n_k, & \mbox{if $\ell \in \cN_k$},\\
			\displaystyle 0, & \mbox{otherwise}.
		\end{array}
	\end{cases}
}
The entries of the Perron eigenvector $p$ are given by
\eq{
	\label{p_k-ave}
	p_k = n_k\left(\sum_{m=1}^{N}n_m\right)^{-1}.
}
With \eqref{a-lk-ave} and \eqref{p_k-ave}, it clearly holds that
\eq{
	a_{\ell k}p_k = \left(\sum_{m=1}^{N}n_m\right)^{-1}=a_{k \ell}p_\ell,
}
which implies \eqref{local-balance}. 

We can further verify that when $\mu_k$ is set as 
%
\eq{\label{mu-ave}
	\mu_k = \frac{q_k}{n_k}\mu_o,\quad \forall\, k=1,2,\cdots,N
}
for some positive constant step-size $\mu_o$ and $\beta$ is set as
\eq{\label{beta-averaging}
	\beta = \left(\sum_{m=1}^{N}n_m\right) \Big/\mu_o > 0,
} 
then condition \eqref{q-A} will hold. \hspace{4.3cm}\qed

{\color{black}
	\vspace{2mm}
	\noindent \textbf{Policy 3 (Relative-degree rule)} The third policy we consider is the relative-degree combination rule \cite{cattivelli2010diffusion} where $a_{\ell k}$ is chosen as
	\eq{\label{xch}
		a_{\ell k} = 
		\begin{cases}
			n_{\ell} \left( \sum_{m\in \cN_k} n_m \right)^{-1}, & \mbox{if $\ell \in \cN_k$},\\
			0, & \mbox{otherwise},
		\end{cases}
	}
	and the entries of the Perron eigenvector $p$ are given by
	\eq{\label{xcnj}
		p_k = \frac{n_k \sum_{m\in \cN_k} n_m }{\sum_{k=1}^{N} \left(n_k \sum_{m\in \cN_k} n_m\right)}.
	}
	With \eqref{xch} and \eqref{xcnj}, it clearly holds that
	\eq{
		a_{\ell k}p_k = \frac{n_k n_{\ell}}{\sum_{k=1}^{N} \left(n_k \sum_{m\in \cN_k} n_m\right)}=a_{k \ell}p_\ell,
	}
	which implies \eqref{local-balance}. 
	
	We can further verify that when $\mu_k$ is set as 
	\eq{\label{b876}
		\mu_k = \frac{q_k}{n_k \sum_{m\in \cN_k} n_m}\mu_o,\quad \forall\, k=1,2,\cdots,K,
	}
	and $\beta$ is set as
	\eq{\label{beta-averaging-4}
		\beta = \sum_{k=1}^{N} \left(n_k \sum_{m\in \cN_k} n_m\right) \Big/\mu_o,
	} 
	then condition \eqref{q-A} will hold. \hspace{4.25cm}\qed
	
}

%
%
%
%
%

\vspace{2mm}
\noindent \textbf{Policy 4 (Doubly stochastic policy)} If matrix $A$ is primitive, symmetric, and doubly stochastic, its Perron eigenvector is $p=\frac{1}{N}\mathds{1}_N$.
In this situation, the local balance condition \eqref{local-balance} holds automatically.

 Furthermore, if we assume each agent employs the step-size $\mu_k = q_k N\mu_o$ for some positive constant step-size $\mu_o$, it can be verified that condition \eqref{q-A} holds with
\eq{\label{xchsss}
	\beta = {1}/{\mu_o}.
} 
There are various rules to generate a primitive, symmetric and doubly stochastic matrix. Some common rules are the Laplacian rule, maximum-degree rule, Metropolis rule and other rules that listed in Table 14.1 in \cite{sayed2014adaptation}. 

\rightline \qed

{\color{black}
\noindent \textbf{Policy 5 (Other locally-balanced policies)} For other left-stochastic-policies for which closed-form expressions for the Perron entries need not be available, the Perron eigenvector $p$ can be learned iteratively to ensure that the step-sizes $\mu_k$ end up satisfying \eqref{step_size_form}. Before we explain how this can be done, we remark that since the combination matrix $A$ is left-stochastic in our formulation, the power iteration employed in push-sum implementations cannot be applied since it works for right-stochastic policies. We proceed instead as follows.

Since $A$ is primitive and left-stochastic, it is shown in \cite{nedic2015distributed,sayed2014adaptation} that 
\eq{\label{23bsnsns}
	\lim_{i\to \infty} A^i = p \mathds{1}_N\tran.
}
This relation also implies 
\eq{\label{2bsns99}
	\lim_{i\to \infty} (A\tran)^i = \mathds{1}_N p\tran.
}
Now let $e_k$ be the $k$-th column of the identity matrix $I_N\in \RR^{N\times N}$. Furthermore, let each agent $k$ keep an auxiliary variable $z_{k,i}\in \RR^N$ with each $z_{k,-1}$ initialized to $e_k$. We also introduce 
\eq{
	\sz_i &\define \col\{z_{1,i},z_{2,i},\cdots, z_{N,i}\}\in \RR^{N^2}, \\
	\cA &\define A \otimes I_N.
}
By iterating $\sz_i$ according to 
\eq{\label{power_iteartion}
	\sz_{i+1} = \cA\tran \sz_i,
}
we have
\eq{\label{zn2bs00}
	\lim_{i\to \infty} \sz_i &= \lim_{i\to \infty} (\cA\tran)^{i+1}\sz_{-1}  \nnb
	&= \lim_{i\to \infty} [(A\tran)^{i+1} \otimes I_N] \sz_{-1} \overset{\eqref{2bsns99}}{=} (\mathds{1}_Np\tran \otimes I_N)\sz_{-1} \nnb
	&= [(\mathds{1}_N \otimes I_N)(p\tran \otimes I_N)]\sz_{-1}.
}
Since $\sz_{-1}=\col\{e_1,\cdots.e_N\}$, it can be verified that $(p\tran \otimes I_N)\sz_{-1} = p$.
Substituting into \eqref{zn2bs00}, we have
$\lim_{i\to \infty} z_{k,i} = p.$
In summary, it holds that
\eq{\label{power_iteration_result}
 \lim_{i\to \infty} z_{k,i}(k) = p_k，
}
where $z_{k,i}(k)$ is the $k$-th entry of the vector $z_{k,i}$. Therefore, if we set 
\eq{\label{xw3ndn}
	\mu_{k,i} = \frac{q_k \mu_o}{z_{k,i}(k)},
}
then it follows that 
\eq{
	\lim_{i\to \infty} \mu_{k,i} = {q_k \mu_o}/{p_k}.
}
{\color{black}
We finally note that the quantity $z_k(i)$ that appears in the denominator of \eqref{xw3ndn} can be guaranteed non-zero. This can be seen as follows.  From the power iteration \eqref{power_iteartion}, we have
\eq{\label{power-iteration-1}
	z_{k,i} & = \sum_{\ell \in \cN_k} a_{\ell k} z_{\ell, i-1}, \quad \forall\, k=1,\cdots,N.
}
Since $z_{k, -1} \succeq 0$ for any $k \in \{1,\cdots, N\}$ and the combination matrix $A$ has non-negative entries, we conclude that $z_{k,i} \succeq 0$ for $i \ge 0$. In addition, focusing on the $k$-th entry, we have
\eq{
	z_{k,i}(k) & = \sum_{\ell \in \cN_k} a_{\ell k} z_{\ell, i-1}(k) \nnb
	&= a_{kk}z_{k,i-1}(k) + \sum_{\ell \in \cN_k \setminus \{k\}} a_{\ell k} z_{\ell, i-1}(k) \nnb
	&\overset{(*)}{\ge} a_{kk}z_{k,i-1}(k) \ge (a_{kk})^{i+1} z_{k, -1}(k)
}
where the inequality (*) holds because $a_{\ell k}\ge 0$ and $z_{\ell, i-1}(k) \ge 0$. Since $z_{k,-1}(k)=1$, if we let $a_{kk} > 0$, i.e., each agent assigns positive weight to itself, we have
\eq{
	z_{k,i}(k) \ge (a_{kk})^{i+1} > 0,\ \forall k =1, \cdots, N, \ \forall i \ge 0.
}
In other words, the condition $a_{kk}>0$ can guarantee the positiveness of $z_{k,i}(k)$. This condition is not restrictive because, for example, we can replace the power iteration \eqref{power-iteration-1} by 
\eq{\label{bs8ks}
z_{k,i} = \sum_{\ell \in \cN_k} \bar{a}_{\ell k} z_{\ell, i-1}, \quad \forall\, k=1,\cdots,N,
}
where we are using the coefficients $\{\bar{a}_{\ell k}\}$ instead of $\{a_{\ell k}\}$. This is possible because the matrices $A$ and $\bar{A} = (I+A)/2$ are both left-stochastic and have the same Perron vector $p$. Note that $\bar{a}_{kk} = (a_{kk}+1)/2 > 0$ no matter whether $a_{kk}$ is zero or not.
}
}
\qd

\begin{figure}[h!]
	\centering
	\includegraphics[scale=0.28]{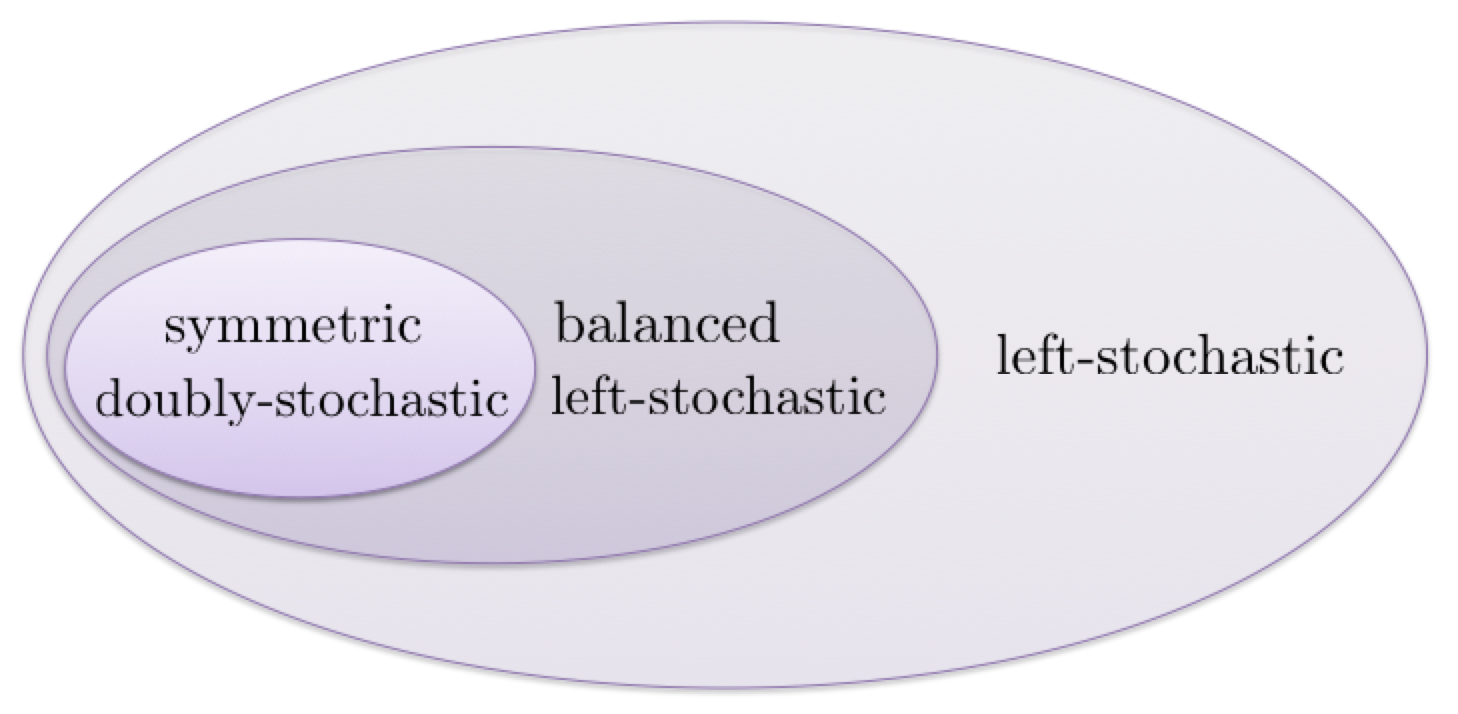}
	\caption{Illustration of the relations among the classes of symmetric doubly-stochastic, balanced left-stochastic, and left-stochastic combination matrices. }
	\label{fig:comb_policy_relations}
\end{figure}

We illustrate in Fig. \ref{fig:comb_policy_relations} the relations among the classes of symmetric doubly-stochastic, balanced left-stochastic, and left-stochastic combination matrices. It is seen that every symmetric doubly-stochastic matrix is both left-stochastic and balanced. We indicated earlier that the EXTRA consensus algorithm was derived in \cite{shi2015extra} with exact convergence properties for symmetric doubly-stochastic matrices. Here, in the sequel, we shall derive an exact diffusion strategy with exact convergence guarantees for the larger class of balanced left-stochastic matrices (which is therefore also applicable to symmetric doubly-stochastic matrices). We will show in Part II\cite{yuan2017exact2} that the exact diffusion implementation has a wider stability range than EXTRA consensus; this is a useful property since larger step-sizes can be used to attain larger convergence rates. 

\vspace{2mm}
\noindent {\color{black}\textbf{Remark 3. (Convergence guarantees)} One may wonder whether exact convergence can be guaranteed for the general left-stochastic matrices that are not necessarily balanced (i.e., whether the convergence property can be extended beyond the middle elliptical area in Fig. \ref{fig:comb_policy_relations}). It turns out that one can provide examples of combination matrices that are left-stochastic (but not necessarily balanced) for which exact convergence occurs and others for which exact convergence does not occur (see, e.g., the examples in Section \ref{sec-general-A} and  Figs. \ref{fig:general_A} and \ref{fig:general_A_converging}). In other words, exact convergence is not always guaranteed beyond the balanced class. This conclusion is another useful contribution of this work; it shows that there is a boundary inside the set of left-stochastic matrices within which convergence can be always guaranteed (namely, the set of balanced matrices).
	
It is worth noting that the recent works \cite{zeng2015extrapush,xi2015linear} extend the consensus-based EXTRA method to the case of directed networks by employing a push-sum technique. These extensions do not require the local balancing condition but they establish convergence only if the step-size parameter falls within an interval $(c_{\rm lower}, c_{\rm upper})$ where $c_{\rm lower}$ and $c_{\rm upper}$ are two positive constants. However, it is not proved in these works whether this interval is feasible, i.e., whether $c_{\rm upper} > c_{\rm lower}$. In fact, we will construct examples in Section \ref{sec-general-A} for which both exact diffusion and push-sum EXTRA will diverge for any step-size $\mu$. In other words, both exact diffusion and EXTRA methods need not work well for directed networks. This is a disadvantage in comparison with DIGing-based methods \cite{lorenzo2016next, nedich2016achieving, qu2017harnessing, xu2015augmented, nedic2016geometrically}.

In summary, when locally-balanced policies is employed, exact diffusion is more communication efficient and also more stable than other techniques including DIGing methods and EXTRA. However, just like EXTRA, the exact diffusion strategy is applicable to undirected (rather than directed) graphs. 
%
\rightline \qd
}


	
}




\subsection{Useful Properties}
We now establish several useful properties for primitive  left-stochastic matrices that satisfy the local balance condition \eqref{local-balance}. These properties will be used in the sequel.

\begin{lemma}[\sc Properties of $AP-P+I_N$]\label{prop-ds-matrix}
When $A$ satisfies the local balance condition \eqref{local-balance}, it holds that 
	the matrix ${AP - P} + I_N$ is primitive, symmetric, and doubly stochastic. 
	%
\end{lemma}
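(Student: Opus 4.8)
Set $B \define AP - P + I_N$, where $P = \diag(p)$ with $p \succ 0$ and $\mathds{1}_N\tran p = 1$ (Perron normalization recorded earlier). The plan is to verify the three claimed properties in turn, each being a short consequence of the local-balance identity, left-stochasticity, and the Perron structure already available.

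First, symmetry. Since $P$ is diagonal, $P\tran = P$, and the matrix form \eqref{lb-compact} of the local-balance condition reads $PA\tran = AP$. Hence $B\tran = PA\tran - P + I_N = AP - P + I_N = B$, which is a one-line computation.

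Second, double stochasticity. I would left-multiply $B$ by $\mathds{1}_N\tran$ and invoke left-stochasticity $\mathds{1}_N\tran A = \mathds{1}_N\tran$ (i.e.\ \eqref{weight}): this collapses $\mathds{1}_N\tran B = \mathds{1}_N\tran A P - \mathds{1}_N\tran P + \mathds{1}_N\tran$ to $\mathds{1}_N\tran$, so every column of $B$ sums to one, and the symmetry just established then forces every row of $B$ to sum to one as well. The remaining task is to check that $B$ has no negative entries: the off-diagonal entries are $[B]_{\ell k} = a_{\ell k}\,p_k \ge 0$ (using $a_{\ell k}\ge 0$ and $p_k>0$), while the diagonal entries are $[B]_{kk} = a_{kk}p_k - p_k + 1 = 1 - p_k(1-a_{kk})$, which lie in $(0,1]$ because $0 < p_k \le 1$ (from $\mathds{1}_N\tran p = 1$, $p\succ 0$) and $0 \le 1 - a_{kk} \le 1$ (from $A$ being left-stochastic with nonnegative entries). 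Thus $B$ is a nonnegative matrix with unit row and column sums, i.e.\ doubly stochastic; moreover for $N\ge 2$ the bound $p_k<1$ is strict, so the diagonal of $B$ is strictly positive.

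Third, primitivity. Since $p\succ 0$, for $\ell\ne k$ we have $[B]_{\ell k}>0$ if and only if $a_{\ell k}>0$, so $B$ and $A$ have the same off-diagonal support and therefore the same underlying directed graph (up to self-loops). As $A$ is primitive it is in particular irreducible, hence that graph is strongly connected, which makes $B$ irreducible; combined with the strict positivity of the diagonal of $B$ shown above, $B$ is primitive (an irreducible nonnegative matrix with a positive diagonal entry is aperiodic, hence primitive). As for the main obstacle: there really isn't a substantive one here — every step is forced by \eqref{lb-compact}, \eqref{weight}, and Perron--Frobenius. The only line that needs a moment's care is the nonnegativity of the diagonal of $B$, which reduces to the elementary Perron-vector bound $p_k(1-a_{kk})\le 1$; the degenerate case $N=1$ (where $B=[1]$) is handled trivially and separately.
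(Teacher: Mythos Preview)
Your argument is correct and follows essentially the same route as the paper's own proof: symmetry from \eqref{lb-compact}, column-stochasticity from $\mathds{1}_N\tran A=\mathds{1}_N\tran$, and primitivity from the fact that $B$ inherits the off-diagonal support of $A$ together with a strictly positive diagonal. The only cosmetic differences are that the paper obtains the row-sum identity directly from $Ap=p$ rather than via symmetry, and that you spell out the entrywise nonnegativity check (and the $N=1$ corner case) more explicitly than the paper does.
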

\begin{proof}
	With condition \eqref{local-balance}, the symmetry of $AP-P+I_N$ is obvious. To check the primitiveness of $AP-P+I_N$, we need to verify two facts, namely, that: (a) at least one diagonal entry in $AP-P+I_N$ is positive, and (b) there exists at least one path with nonzero weights between any two agents. It is easy to verify condition (a) because $A$ is already primitive and $P< I_N$. For condition (b), since $A$ is connected and all diagonal entries of $P$ are positive, then if there exists a path with nonzero coefficients linking agents $k$ and $\ell$ under $A$, the same path will continue to exist under $AP$. Moreover, since all diagonal entries of $ -P+I_N$ are positive, then the same path will also exist under $AP-P+I_N$. Finally, ${AP - P} + I_N$ is doubly stochastic because
	\eq{
		\mathds{1}_N\tran \left(AP - P + I_N \right) &= p\tran - p\tran + \mathds{1}_N\tran = \mathds{1}_N\tran, 
	}
	\eq{
		\left(AP - P + I_N \right)\mathds{1}_N &= p - p + \mathds{1}_N= \mathds{1}_N. \label{xcnh90}
	}
\end{proof}

\begin{lemma}[\sc Nullspace of $P-AP$]\label{coro-psd}
	When $A$ satisfies the local balance condition \eqref{local-balance}, it holds that $P-AP$ is symmetric and positive semi-definite. Moreover, it holds that 
	\eq{\label{null-P-AP}
		\mathrm{null}(P-AP) = \mathrm{span}\{\mathds{1}_N\},
	}
	where $\mathrm{null}(\cdot)$ denotes the null space of its matrix argument.
\end{lemma}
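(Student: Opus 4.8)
The plan is to leverage Lemma~\ref{prop-ds-matrix}, which already tells us that $M \define AP - P + I_N$ is primitive, symmetric, and doubly stochastic. Write $P - AP = I_N - M$. Since $M$ is symmetric, so is $P - AP$, giving the first claim immediately. For positive semi-definiteness, recall that a symmetric (doubly) stochastic matrix $M$ has all eigenvalues in $[-1,1]$; but here we need the eigenvalues of $M$ to lie in $[0,1]$, equivalently that $M$ has no eigenvalue equal to $-1$ and more precisely that $I_N - M \succeq 0$. A clean way is to note that $M = AP - P + I_N$ has nonnegative diagonal entries $a_{kk}p_k - p_k + 1 = 1 - p_k(1 - a_{kk}) \ge 1 - p_k > 0$ and nonnegative off-diagonal entries $a_{\ell k}p_k \ge 0$, and each row sums to $1$; hence $M$ is weakly diagonally dominant with nonnegative diagonal, so $M \succeq 0$ by the Gershgorin circle theorem, and therefore $I_N - M = P - AP \succeq 0$. (Alternatively: $P - AP = \frac12\big[(P-AP) + (P-AP)\tran\big]$ can be written via the local balance condition as a weighted graph Laplacian $\sum_{\ell < k} a_{\ell k}p_k (e_\ell - e_k)(e_\ell - e_k)\tran$, which is manifestly PSD — this simultaneously gives the nullspace characterization.)

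For the nullspace, the Laplacian representation is the most transparent route. Using \eqref{local-balance}, for any vector $x$,
\eq{
x\tran (P - AP) x = \sum_{k=1}^N p_k x_k^2 - \sum_{k,\ell} a_{\ell k} p_k x_\ell x_k = \frac12 \sum_{k=1}^N \sum_{\ell=1}^N a_{\ell k} p_k (x_k - x_\ell)^2,
}
where the last equality uses $\sum_\ell a_{\ell k} = 1$ (left-stochasticity) and $a_{\ell k} p_k = a_{k\ell} p_\ell$ to symmetrize. This quantity is zero iff $x_k = x_\ell$ for every pair $(k,\ell)$ with $a_{\ell k} > 0$. Since $A$ is primitive, its underlying graph is connected, so this forces $x$ to be constant, i.e., $x \in \mathrm{span}\{\mathds{1}_N\}$. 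Conversely $\mathds{1}_N \in \mathrm{null}(P - AP)$ because $(P - AP)\mathds{1}_N = p - Ap = p - p = 0$. This establishes \eqref{null-P-AP}.

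The one point requiring a little care — and the only real obstacle — is justifying the symmetrization identity that turns $\sum_{k}p_k x_k^2 - \sum_{k,\ell} a_{\ell k}p_k x_k x_\ell$ into the sum-of-squares form; this needs both the column-sum normalization of $A$ and the local balance relation, applied in the right order, and one should double-check index bookkeeping (which index is summed against the stochasticity constraint). Everything else is routine. I would present the proof by first invoking Lemma~\ref{prop-ds-matrix} for symmetry, then deriving the quadratic-form identity above to obtain both PSD-ness and the nullspace inclusion $\subseteq$, and finally verifying $(P-AP)\mathds{1}_N = 0$ for the reverse inclusion, using $Ap = p$.
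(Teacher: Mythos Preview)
Your final plan --- invoke Lemma~\ref{prop-ds-matrix} for symmetry, then derive the Laplacian quadratic-form identity
\[
x\tran (P-AP)x \;=\; \tfrac12 \sum_{k,\ell} a_{\ell k}p_k\,(x_k-x_\ell)^2
\]
to get PSD-ness and $\mathrm{null}(P-AP)\subseteq\mathrm{span}\{\mathds{1}_N\}$ via connectedness, and finish with $(P-AP)\mathds{1}_N=p-Ap=0$ --- is correct.

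One side remark in your proposal is wrong and should be dropped. You argue that $M=AP-P+I_N$ is weakly diagonally dominant with nonnegative diagonal, hence $M\succeq 0$, ``and therefore $I_N-M=P-AP\succeq 0$.'' The last implication is a non sequitur: $M\succeq 0$ says nothing about $I_N-M$. (What you actually need is $\lambda_{\max}(M)\le 1$, which follows simply from $M$ being symmetric doubly stochastic.) The diagonal-dominance premise can also fail: the $k$-th diagonal entry of $M$ is $1-p_k(1-a_{kk})$ and the off-diagonal row-sum is $p_k(1-a_{kk})$, so dominance would require $p_k(1-a_{kk})\le\tfrac12$, which need not hold. Since you do not rely on this in the final write-up, just delete the remark.

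\textbf{Comparison with the paper.} The paper's proof is purely spectral: it uses Lemma~\ref{prop-ds-matrix} to obtain that $M=AP-P+I_N$ is primitive, symmetric, and doubly stochastic, then cites an external Perron--Frobenius-type result to conclude $1=\lambda_1(M)>\lambda_2(M)\ge\cdots\ge\lambda_N(M)>-1$. From this, $P-AP=I_N-M$ has nonnegative eigenvalues (PSD) and the eigenvalue $0$ is simple with eigenvector $\mathds{1}_N$, giving \eqref{null-P-AP}. Your Laplacian route is genuinely different and more self-contained: it avoids the cited spectral lemma entirely, and the single sum-of-squares identity yields both PSD-ness and the nullspace inclusion at once, with the reverse inclusion coming from $Ap=p$. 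The paper's route is shorter if one is willing to import the spectral fact; yours is more elementary and arguably more transparent about \emph{why} the kernel is $\mathrm{span}\{\mathds{1}_N\}$ (graph connectedness, rather than algebraic simplicity of the top eigenvalue).
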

\begin{proof}
	Let $\lambda_k$ denote the $k$-th largest eigenvalue of ${AP - P} + I_N$. Recall from Lemma \ref{prop-ds-matrix} that $AP - P + I_N$ is primitive and doubly stochastic. Therefore, according to Lemma F.4 from \cite{sayed2014adaptation} it holds that
	\eq{\label{dzxcn}
		1 = \lambda_1 > \lambda_2 \ge \lambda_3 \ge \cdots \ge \lambda_N > -1,
	}
	It follows that the eigenvalues of $AP-P$ are non-positive so that $P - AP \ge 0$.
	
	Note further from \eqref{dzxcn} that the matrix ${AP - P}+ I_N$ has a single eigenvalue at one with multiplicity one. Moreover, from \eqref{xcnh90} we know that the vector $\mathds{1}_N$ is a right-eigenvector associated with this eigenvalue at one. Based on these two facts, we have
	\eq{\label{xzch-1}
	\left( {AP - P} + I_N \right)x =x \Longleftrightarrow x = c \mathds{1}_N
	}
	for any constant $c$. Relation \eqref{xzch-1} is equivalent to
	\eq{
	\left({AP - P}\right)x =0 \Longleftrightarrow x = c \mathds{1}_N,
	}
	which confirms \eqref{null-P-AP}. 
\end{proof}
\vspace{1mm}
\begin{corollary}[\sc Nullspace of $\cP - \cA \cP$]\label{coro-nullspace-P-AP}
	Let $\cP\define P\otimes I_M$ and $\cA \define A\otimes I_M$. When $A$ satisfies the local balance condition \eqref{local-balance}, it holds that
	\eq{\label{xcn987}
		\mathrm{null}( \cP - \cA \cP ) &=  \mathrm{null}\Big( (P - AP) \otimes I_M \Big) \nnb
		&= \mathrm{span}\{ \mathds{1}_N \otimes I_M \}.
	}
	Moreover, for any block vector $\sx=\col\{x_1,x_2,\cdots,x_N\} \in \RR^{MN}$ in the nullspace of ${\cal P}-{\cal A}{\cal P}$ with entries $x_k\in \RR^M$, it holds that
	\eq{\label{xcn86778}
		( \cP - \cA \cP )\sx=0 \Longleftrightarrow x_1 = x_2 =\cdots = x_N.
	}
\end{corollary}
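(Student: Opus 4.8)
The plan is to reduce Corollary~\ref{coro-nullspace-P-AP} to Lemma~\ref{coro-psd} by exploiting the Kronecker-product structure. First I would observe that $\cP - \cA\cP = (P\otimes I_M) - (A\otimes I_M)(P\otimes I_M) = (P - AP)\otimes I_M$, using the mixed-product property of Kronecker products. This gives the first equality in \eqref{xcn987} immediately. Then I would invoke the standard fact that, for any matrix $B\in\RR^{N\times N}$, the null space of $B\otimes I_M$ equals $\mathrm{null}(B)\otimes \RR^M$ (equivalently, $\mathrm{span}\{v\otimes I_M : v\in\mathrm{null}(B)\}$): indeed, writing a vector $\sx\in\RR^{MN}$ as a block column $\col\{x_1,\dots,x_N\}$, the action of $B\otimes I_M$ mixes the blocks exactly as $B$ mixes scalar coordinates, so $(B\otimes I_M)\sx = 0$ iff each of the $M$ ``coordinate slices'' of $\sx$ lies in $\mathrm{null}(B)$.

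Applying this with $B = P - AP$ and using Lemma~\ref{coro-psd}, which gives $\mathrm{null}(P-AP) = \mathrm{span}\{\mathds{1}_N\}$, yields $\mathrm{null}(\cP - \cA\cP) = \mathrm{span}\{\mathds{1}_N\}\otimes\RR^M = \mathrm{span}\{\mathds{1}_N\otimes I_M\}$, which is the second equality in \eqref{xcn987}. For the final claim \eqref{xcn86778}, I would simply unpack what membership in $\mathrm{span}\{\mathds{1}_N\otimes I_M\}$ means for a block vector: $\sx = (\mathds{1}_N\otimes I_M)y$ for some $y\in\RR^M$ is exactly the statement that every block $x_k$ equals $y$, i.e., $x_1 = x_2 = \cdots = x_N$. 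Conversely, if all blocks are equal to a common $x\in\RR^M$, then $\sx = \mathds{1}_N\otimes x = (\mathds{1}_N\otimes I_M)x$ lies in the null space.

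There is really no hard step here; the corollary is a routine ``lift to the Kronecker level'' argument, and all the analytical content (positive semi-definiteness and the one-dimensional null space) was already established in Lemma~\ref{coro-psd}. The only point requiring a modicum of care is the null-space identity for $B\otimes I_M$; I would either cite it as a standard property of Kronecker products or give the one-line block-coordinate verification sketched above. If one wanted to be fully self-contained, one could also note that $\mathrm{null}(P-AP)\otimes\RR^M \subseteq \mathrm{null}((P-AP)\otimes I_M)$ is obvious from the mixed-product rule, and the reverse inclusion follows by a dimension count, since $\dim\mathrm{null}((P-AP)\otimes I_M) = M\cdot\dim\mathrm{null}(P-AP) = M$ (because the nonzero eigenvalues of $B\otimes I_M$ are those of $B$, each with multiplicity multiplied by $M$).
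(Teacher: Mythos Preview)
Your proposal is correct and follows essentially the same route as the paper: both identify $\cP-\cA\cP=(P-AP)\otimes I_M$, lift the one-dimensional nullspace from Lemma~\ref{coro-psd} to the Kronecker level, and then unpack what membership in $\mathrm{span}\{\mathds{1}_N\otimes I_M\}$ means blockwise. The only cosmetic difference is that the paper carries this out by exhibiting the $M$ explicit eigenvectors $\mathds{1}_N\otimes e_k$ of $(P-AP+I_N)\otimes I_M$ at eigenvalue~$1$ and counting multiplicities, whereas you invoke the general identity $\mathrm{null}(B\otimes I_M)=\mathrm{null}(B)\otimes\RR^M$ directly; your version is slightly cleaner but the content is identical.
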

\begin{proof}
	Since $P-AP+I_N$ has a single eigenvalue at $1$ with multiplicity one, we conclude that $(P-AP+I_N)\otimes I_M$ will have an eigenvalue at $1$ with multiplicity $M$. Next we denote the columns of the identity matrix by $I_M=[e_1, e_2, \cdots, e_N]$ where $e_k\in \RR^M$. We can verify that $\mathds{1}_N\otimes e_k$ is a right-eigenvector associated with the eigenvalue $1$ because
	\begin{align}
	&\hspace{-10mm} [(P-AP+I_N)\otimes I_M][\mathds{1}_N\otimes e_k]\nnb
	=&\ [(P-AP+I_N)\mathds{1}_N] \otimes e_k =  \mathds{1}_N \otimes e_k.
	\end{align}
	Now since any two vectors in the set $\{\mathds{1}_N \otimes e_k\}_{k=1}^M$ are mutually independent, we conclude that 
	\begin{align}
	 (\cP - \cA \cP) \sx = 0 \Longleftrightarrow &\ (\cP - \cA \cP + I_{MN}) \sx = \sx\nnb
	\Longleftrightarrow &\ \sx \in \mathrm{span}\{[\mathds{1}_N \otimes e_1,\cdots, \mathds{1}_N \otimes e_M]\}\nnb
	\Longleftrightarrow &\ \sx \in \mathrm{span}\{ \mathds{1}_N \otimes I_M \}.
	\end{align}	
	These equalities establish \eqref{xcn987}. From \eqref{xcn987} we can also conclude \eqref{xcn86778} because
	\eq{
		&\hspace{-15mm} \sx \in \mathrm{span}\{ \mathds{1}_N \otimes I_M \} \nnb
		\Rightarrow	&\ \sx = (\mathds{1}_N \otimes I_M) \cdot x = \col\{x,x,\cdots,x\}
	}
	from some $x\in \RR^M$. The direction ``$\Leftarrow$" of \eqref{xcn86778} is obvious.
\end{proof}

\begin{lemma}[\sc Real eigenvalues] \label{coro-real-eigenvalues}
	When $A$ satisfies the local balance condition \eqref{local-balance}, it holds that $A$ is diagonalizable with real eigenvalues in the interval $(-1,1]$, i.e.,
	\eq{\label{A-decomposition}
		A = Y \Lambda Y^{-1},
	}
	where $\Lambda = {\rm diag}\{\lambda_1(A), \cdots, \lambda_N(A)\} \in \RR^{N\times N}$, and 
	\eq{\label{A-eig-val}
		1 = \lambda_1(A) \hspace{-0.2mm} >  \hspace{-0.2mm} \lambda_2(A)  \hspace{-0.2mm}\ge \hspace{-0.2mm} \lambda_3(A)  \hspace{-0.2mm}\ge \hspace{-0.2mm} \cdots  \hspace{-0.2mm}\ge \hspace{-0.2mm} \lambda_N(A)  \hspace{-0.2mm} >  \hspace{-0.2mm} -1.
	}
\end{lemma}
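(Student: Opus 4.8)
The plan is to exploit the local balance condition \eqref{lb-compact}, which states that $PA\tran = AP$; in other words, the matrix $AP$ is symmetric. Since $P = \diag(p)$ with $p \succ 0$, the matrix $P$ is positive definite and admits a well-defined symmetric positive-definite square root $P^{1/2}$. First I would introduce the similarity transformation $S \define P^{-1/2} A P^{1/2}$ and observe that $S = P^{-1/2}(AP)P^{-1/2}$ is symmetric precisely because $AP$ is symmetric. Consequently $A = P^{1/2} S P^{-1/2}$ is similar to a real symmetric matrix, and by the spectral theorem we may write $S = Q\Lambda Q\tran$ for an orthogonal $Q$ and a real diagonal $\Lambda$, so that $A = (P^{1/2}Q)\,\Lambda\,(P^{1/2}Q)^{-1}$. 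This establishes \eqref{A-decomposition} with $Y = P^{1/2}Q$ and shows at once that $A$ is diagonalizable with real eigenvalues.

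It then remains to locate these eigenvalues. Since $A$ is left-stochastic, its transpose $A\tran$ is nonnegative and row-stochastic, so $\|A\tran\|_\infty = 1$ and $A\tran\mathds{1}_N = \mathds{1}_N$; hence the spectral radius of $A$ equals that of $A\tran$, which equals $1$, and $\lambda = 1$ is an eigenvalue. Because $A$ is primitive, the Perron--Frobenius theorem (already invoked in Section \ref{sec-diffusion}) guarantees that $1$ is a simple eigenvalue and is the unique eigenvalue of maximum modulus. Combining this with the realness of the spectrum just established yields $1 = \lambda_1(A) > |\lambda_k(A)|$ for every $k \ge 2$, hence $\lambda_k(A)\in(-1,1)$ for $k=2,\dots,N$, which is exactly the ordering \eqref{A-eig-val}.

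I do not anticipate a serious obstacle: the whole argument hinges on the single observation that local balance makes $AP$ symmetric, after which diagonalizability and realness follow from the standard symmetrizing similarity, and the $(-1,1]$ localization follows from Perron--Frobenius for primitive left-stochastic matrices. The one point deserving care is the order of the steps: realness must be proved \emph{before} localizing the eigenvalues, since primitivity by itself only confines the non-Perron eigenvalues to the open unit disk, and it is realness that collapses this to the open interval $(-1,1)$. One could alternatively try to import the eigenvalue bounds from Lemma \ref{prop-ds-matrix} and Lemma \ref{coro-psd} applied to $AP - P + I_N$, but that matrix is not similar to $A$, so such a route would need an extra comparison argument; the direct Perron--Frobenius reasoning above is cleaner.
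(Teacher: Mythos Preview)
Your proposal is correct and follows essentially the same approach as the paper: both conjugate $A$ by $P^{1/2}$ to obtain the symmetric matrix $P^{-1/2}(AP)P^{-1/2}$, diagonalize it orthogonally, and then pull back to get $Y=P^{1/2}Q$. For the eigenvalue localization the paper simply cites a Perron--Frobenius lemma from \cite{sayed2014adaptation}, whereas you spell out the same reasoning explicitly; this is a presentational difference, not a substantive one.
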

\begin{proof}
	According to the local balance condition \eqref{lb-compact}, $PA\tran$ is symmetric. Using the fact that $P>0$ is diagonal, it holds that 
	\eq{
		P^{-\frac{1}{2}}A P^{\frac{1}{2}} = P^{-\frac{1}{2}} (AP) P^{-\frac{1}{2}},
	}
	which shows that the matrix on the left-hand side is symmetric. Therefore, $P^{-\frac{1}{2}}A P^{\frac{1}{2}}$ can be decomposed as 
	\eq{
		P^{-\frac{1}{2}}A P^{\frac{1}{2}} = Y_1 \Lambda Y_1\tran,\label{xcn76}
	}
	where $Y_1$ is an orthogonal matrix and $\Lambda$ is a real diagonal matrix. From \eqref{xcn76}, we further have that
	\eq{
		A = P^{\frac{1}{2}} Y_1 \Lambda Y_1\tran P^{-\frac{1}{2}}.
	}
	If we let $Y=P^{\frac{1}{2}} Y_1$, we reach the decomposition \eqref{A-decomposition}. Moreover, since $A$ is a primitive left-stochastic matrix, according to Lemma F.4 in \cite{sayed2014adaptation}, the eigenvalues of $A$ satisfy \eqref{A-eig-val}.
\end{proof}

For ease of reference, we collect in Table \ref{tab:table} the properties established in Lemmas \ref{prop-ds-matrix} through \ref{coro-real-eigenvalues} for balanced primitive left-stochastic matrices $A$.

\begin{table}[h]
	\centering
	\caption{}\vspace{-5mm}
	\label{tab:table}
	\begin{tabular}{l}
		\hspace{-2mm}\HRule\\
		\hspace{-2mm}\textbf{Properties of balanced primitive left-stochastic matrices $A$}\vspace{-2mm}\\
		\hspace{-2mm}\HRule\\
		\hspace{-2mm}$A$ is diagonalizable with real eigenvalues in $(-1,1]$;\\
		\hspace{-2mm}$A$ has a single eigenvalue at $1$;\\
		\hspace{-2mm}$AP-P+I_N$ is symmetric, primitive, doubly-stochastic;\\
		\hspace{-2mm}$P-AP$ is positive semi-definite;\\
		\hspace{-2mm}$\mathrm{null}(P-AP)=\mathrm{span}(\mathds{1}_N)$;\\
		\hspace{-2mm}$\mathrm{null}( \cP - \cA \cP )=\mathrm{span}\{ \mathds{1}_N \otimes I_M \}$.\\
		\hspace{-2mm}\HRule\\
	\end{tabular}
\end{table}

%
%


%

\section{Penalized Formulation of Diffusion}\label{sec-diffusion-penalty}
In this section, we employ the properties derived in the previous section to reformulate the unconstrained optimization problem \eqref{prob-dist} into the equivalent constrained problem \eqref{prob-compact-2}, which will be solved using a penalized formulation.  This derivation will help clarify the origin of the $O(\mu^2_{\max})$ bias from \eqref{diffusion-limsup} in the standard diffusion implementation. 
%

\subsection{Constrained Problem Formulation}
\label{sbsec-diffusion-penalty}
To begin with, note that the unconstrained problem \eqref{prob-dist} is equivalent to the following constrained problem:
\eq{\label{prob-dist-2}
\min_{\{w_k\}}\quad& \sum_{k=1}^{N} q_k J_k(w_k), \nnb
\mathrm{s.t.}\quad& w_1=w_2=\cdots=w_N.
}
Now we introduce the block vector $\sw \define \mathrm{col}\{w_1,\cdots, w_N\}\in \RR^{NM}$ and 
\eq{\label{cJ-defi}
	\cJ^\star(\sw) \define \sum_{k=1}^{N}q_k J_k(w_k),
}
With \eqref{xcn86778} and \eqref{cJ-defi}, problem \eqref{prob-dist-2} is equivalent to
\eq{\label{prob-compact-1}
	\min_{\sw\in \RR^{NM}}\quad \cJ^\star(\sw), \quad
	\mathrm{s.t.}\quad \frac{1}{2}\left({\cP - \cA \cP}\right) \sw = 0.
}
%
%
%
%
From Lemma  \ref{coro-psd}, we know that $P - A P$ is symmetric and positive semi-definite. Therefore, we can decompose 
\eq{\label{syud}
\frac{P - AP}{2} = U \Sigma U\tran,
}
where $\Sigma \in \RR^{N\times N}$ is a non-negative diagonal matrix and $U\in \RR^{N\times N}$ is an orthogonal matrix. If we introduce the symmetric square-root matrix
\eq{\label{V-defi}
V \define U \Sigma^{1/2} U\tran \in \RR^{N\times N},
}
then it holds that
\eq{\label{nweh}
\frac{P - AP}{2} = V^2.
}
Let $\cV \define V\otimes I_M$ so that
\eq{\label{P-AP=VTV}
	\frac{\cP - \cA \cP}{2} = \cV^2.
}
{
\begin{lemma}[\sc Nullspace of $V$]\label{lm:null-V}
	With $V$ defined as in \eqref{V-defi}, it holds that
	\eq{\label{null-V}
	\mathrm{null}(V) = \mathrm{null}(P-AP) = \mathrm{span}\{\mathds{1}_N\}.
	}
\end{lemma}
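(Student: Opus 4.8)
The plan is to exploit the fact that $V$ is symmetric and positive semi-definite by construction, so that it shares its null space with its own square. Concretely, from \eqref{V-defi} the matrix $V = U\Sigma^{1/2}U^{\mathsf{T}}$ is symmetric, and its eigenvalues are the entries of $\Sigma^{1/2}$, which are non-negative. First I would record the elementary linear-algebra fact that for a symmetric positive semi-definite matrix $V$ one has $\mathrm{null}(V) = \mathrm{null}(V^2)$: the inclusion $\mathrm{null}(V)\subseteq\mathrm{null}(V^2)$ is immediate, and conversely if $V^2 x = 0$ then $0 = x^{\mathsf{T}}V^2 x = \|Vx\|^2$, hence $Vx = 0$. (Equivalently, one may argue through the spectral decomposition: $V$ and $V^2 = U\Sigma U^{\mathsf{T}}$ are simultaneously diagonalized by $U$, and the $i$-th diagonal entry $\sigma_i^{1/2}$ of $\Sigma^{1/2}$ vanishes if and only if $\sigma_i$ does, so the zero-eigenspaces coincide.)

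Next I would chain this with \eqref{nweh}: since $\tfrac{P-AP}{2} = V^2$, we get $\mathrm{null}(V) = \mathrm{null}(V^2) = \mathrm{null}\!\big(\tfrac{P-AP}{2}\big) = \mathrm{null}(P-AP)$, the last equality because scaling by the nonzero constant $\tfrac12$ does not change the null space. Finally, Lemma \ref{coro-psd} already gives $\mathrm{null}(P-AP) = \mathrm{span}\{\mathds{1}_N\}$, which completes the identification \eqref{null-V}.

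I do not anticipate a real obstacle here: the statement is essentially the observation that a PSD matrix and its square have the same kernel, combined with the previously established description of $\mathrm{null}(P-AP)$. The only point that deserves a sentence of care is justifying that the spectral decomposition in \eqref{syud} does produce a genuine symmetric PSD square root $V$ — i.e., that $\Sigma$ is a \emph{non-negative} diagonal matrix — but this is exactly what Lemma \ref{coro-psd} guarantees, and it has already been used to write \eqref{syud}–\eqref{nweh}. So the proof is short and purely a matter of citing Lemma \ref{coro-psd} and the PSD-kernel fact in the right order.
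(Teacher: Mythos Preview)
Your proposal is correct and matches the paper's own proof essentially line for line: the paper also argues that $(P-AP)x=0 \Rightarrow V^2x=0 \Rightarrow x^{\mathsf T}V^{\mathsf T}Vx=0 \Rightarrow \|Vx\|^2=0 \Rightarrow Vx=0$, notes the reverse inclusion is obvious, and then invokes Lemma~\ref{coro-psd} for $\mathrm{null}(P-AP)=\mathrm{span}\{\mathds 1_N\}$. The additional spectral-decomposition remark you offer is a harmless alternative justification of the same kernel identity.
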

\begin{proof}To prove $\mathrm{null}(V) = \mathrm{null}(P-AP)$, it is enough to prove 
	\eq{\label{xcn388}
	(P - AP) x = 0 \Longleftrightarrow Vx = 0.
	}
	Indeed, notice that
	\eq{\label{zxc0-1}
		(P - A P) x = 0
		\Rightarrow&\ V^2 x =0 \Rightarrow x\tran V\tran V x =0 \nnb
		\Rightarrow&\ \|V x\|^2 = 0 \Rightarrow  V x =0.
	}
	The reverse direction ``$\Leftarrow$" in \eqref{xcn388} is obvious.
\end{proof}
\vspace{1mm}
\noindent {\bf Remark 4. (Nullspace of $\cV$)}  
 Similar to the arguments in \eqref{xcn987} and \eqref{xcn86778}, we have
	\eq{\label{xcn987-2}
		\mathrm{null}( \cV ) =  \mathrm{null}( \cP - \cA \cP ) = \mathrm{span}\{ \mathds{1}_N \otimes I_M \},
	}
	and, hence,
	\eq{\label{zn}
		\cV \sx = 0 \Longleftrightarrow ( \cP - \cA \cP )\sx=0 \Longleftrightarrow x_1 =\cdots = x_N.
	}
	\rightline \qed
}
%
%
%
%
With \eqref{zn}, problem \eqref{prob-compact-1} is equivalent to
\eq{\label{prob-compact-2}
	\min_{\sw\in \RR^{NM}}\quad \cJ^\star(\sw), \quad 
	\mathrm{s.t.}\quad \cV \sw = 0.
}
In this way, we have transformed the original problem \eqref{prob-dist} to the equivalent constrained problem \eqref{prob-compact-2}.

\subsection{Penalized Formulation}

There are many techniques to solve constrained problems of the form \eqref{prob-compact-2}. One useful and popular technique is to add a penalty term to the cost function and to consider instead a penalized problem of the form:
%
\eq{\label{prob-penalty-1}
\min_{\sw\in \RR^{NM}}\quad \cJ^\star(\sw) + \frac{1}{\alpha} \left\|\cV \sw \right\|^2,
}
where $\alpha > 0$ is a penalty parameter. Problem \eqref{prob-penalty-1} is not equivalent to \eqref{prob-compact-2} but is a useful approximation. The smaller the value of $\alpha$ is, the closer the solutions of problems \eqref{prob-compact-2} and \eqref{prob-penalty-1} become to each other \cite{boyd2004convex,fletcher2013practical,towfic2014adaptive}. 
We now verify that the diffusion strategy \eqref{d-1}--\eqref{d-2} follows from applying an incremental technique to solving the approximate penalized problem \eqref{prob-penalty-1}, not the real problem \eqref{prob-compact-2}. It will then become clear that the diffusion estimate $w_{k,i}$ cannot converge to the exact solution $w^\star$ of problem \eqref{prob-dist} (or \eqref{prob-compact-2}).

 Since \eqref{P-AP=VTV} holds, problem \eqref{prob-penalty-1} is equivalent to
\eq{\label{prob-penalty-2}
\min_{w\in \RR^{NM}}\quad \cJ^\star(\sw) + \frac{1}{2\alpha}\sw\tran(\cP-\cA \cP)\sw.
}
This is an unconstrained problem, which we can solve using, for example, a  {diagonally-weighted} incremental algorithm, namely,
\begin{equation}
\left\{
\begin{aligned}
	\psi_i &= \sw_{i-1} - \alpha \cP^{-1} \grad \cJ^\star(\sw_{i-1}), \label{incremental-1} \\
	\sw_i &= \psi_i - \alpha \cP^{-1} \Big( \frac{1}{\alpha}(\cP-\cA \cP)\psi_i \Big), 
\end{aligned}
\right.
\end{equation}
The above recursion can be simplified as follows. Assume we select 
\eq{\label{alpha}
	\alpha\define \beta^{-1},
}
where $\beta$ is the same constant used in relation \eqref{q-A}. Recall from \eqref{beta-hastings}, \eqref{beta-averaging}, \eqref{beta-averaging-4} and \eqref{xchsss} that $\beta= O(1/\mu_{\max})$ and hence $\alpha = O(\mu_{\max})$. Moreover, from the definition of $\cJ^\star(\sw)$ in \eqref{cJ-defi}, we have
\eq{\label{grad-J-star}
	\grad \cJ^\star (\sw) = 
	\ba{c}
	q_1 \grad J_1(w_1)\\
	\vdots\\
	q_N \grad J_N(w_N)
	\ea	
}
Using \eqref{q-A}, namely,
\eq{\label{q_k-C-mu-p}
q_k = \beta \mu_k p_k,
}
we find that
%
%
\eq{\label{zlckj}
\alpha \cP^{-1} \grad \cJ^\star(\sw_{i-1}) = 
\ba{c}
\mu_1 \grad J_1(w_{1,i-1})\\
\vdots\\
\mu_N \grad J_N(w_{K,i-1})
\ea.
}
We further introduce the aggregate cost (which is similar to \eqref{cJ-defi} but without the weighting coefficients):
\eq{
\cJ^o(\sw) \define \sum_{k=1}^{N}J_k(w_k),
}
and note that
\eq{\label{grad-J^o}
\grad \cJ^o(\sw) =
\ba{c}
\grad J_1(w_1)\\
\vdots\\
\grad J_N(w_N)
\ea.	
}
Let $\cM\define \mbox{diag}\{\mu_1,\mu_2,\cdots,\mu_N\}\otimes I_M$. Using \eqref{zlckj} and \eqref{grad-J^o}, the first recursion in \eqref{incremental-1} can be rewritten as
\eq{\label{zcb-2}
\psi_i = \sw_{i-1} - \cM \grad \cJ^o(\sw_{i-1}).
}
For the second recursion of \eqref{incremental-1}, it can be rewritten as
\eq{\label{zcb-1}
	\sw_i = \cA\tran \psi_i
}
because $\cA \cP=\cP\cA\tran$. Relations \eqref{zcb-2}--\eqref{zcb-1} are equivalent to \eqref{d-1}--\eqref{d-2}. Specifically, if we collect all iterates from across all agents into block vectors $\{\sw_i, \psi_i\}$, then \eqref{d-1}--\eqref{d-2} would lead to \eqref{zcb-2}--\eqref{zcb-1}.  From this derivation, we conclude that the diffusion algorithm \eqref{d-1}--\eqref{d-2} can be interpreted as performing the diagonally-weighted incremental construction \eqref{incremental-1} to solve the approximate penalized problem \eqref{prob-penalty-2}. Since this construction is not solving the real problem \eqref{prob-dist}, there exists a bias between its fixed point and the real solution $w^\star$. As shown in \eqref{diffusion-limsup}, the size of this bias is related to $\mu_{\max}$. When $\mu_{\max}$ is small, the bias is also small. This same conclusion can be seen by noting that a small $\mu_{\max}$ corresponds to a large penalty factor $1/\alpha$ under which the solutions to problems \eqref{prob-dist} and \eqref{prob-compact-2} approach each other. 

\section{Development of Exact Diffusion}\label{sec-exact-diffusion}

{We now explain how to adjust the diffusion strategy \eqref{d-1}--\eqref{d-2} to ensure exact  convergence to $w^{\star}$. Instead of solving the approximate penalized problem \eqref{prob-penalty-2}, we apply the primal-dual saddle point method to solve the original problem \eqref{prob-compact-2} directly. 
We continue to assume that the combination policy $A$ is primitive and satisfies the local balancing condition \eqref{local-balance}. }

%


 To solve \eqref{prob-compact-2} with saddle point algorithm, we first introduce the augmented Lagrangian function:
\eq{
	\cL_a(\sw,\sy)
	=&\ \cJ^\star(\sw)+\frac{1}{\alpha}\sy\tran \cV \sw + \frac{1}{2\alpha}\left\|\cV\sw\right\|^2 \nnb
	\overset{\eqref{P-AP=VTV}}{=}&\ \cJ^\star(\sw) \hspace{-0.8mm}+\hspace{-0.8mm}\frac{1}{\alpha}\sy\tran \cV \sw \hspace{-0.8mm}+\hspace{-0.8mm} \frac{1}{4\alpha}\sw\tran (\cP\hspace{-0.8mm}-\hspace{-0.8mm}\cP\cA\tran) \sw,
}
where $\sy = \col\{y_1, \cdots, y_N\} \in \RR^{NM}$ is the dual variable. The standard primal-dual saddle point algorithm has recursions
\begin{equation}
	\left\{
	\begin{aligned}
	\sw_i &= \sw_{i-1} - \alpha \grad_{\sw} \cL_a(\sw_{i-1},\sy_{i-1}), \label{AL-1}\\
	\sy_i &= \sy_{i-1} + \alpha \left(\frac{1}{\alpha} \cV \sw_i\right) = \sy_{i-1} + \cV \sw_i.
	\end{aligned}
	\right.
\end{equation}
 The first recursion in \eqref{AL-1} is the primal descent while the second recursion is the dual ascent. Now, instead of performing the descent step directly as shown in the first recursion in \eqref{AL-1}, we perform it in an incremental manner. Thus, let 
\eq{
	\cD(\sw) \define \frac{1}{4\alpha}\sw\tran (\cP \hspace{-0.8mm}-\hspace{-0.8mm}\cP \cA\tran) \sw, \ \cC(\sw,\sy) \define \frac{1}{\alpha}\sy\tran \cV \sw,
}
so that
\eq{
	\cL_a(\sw,\sy_{i-1})=\cJ^\star(\sw) + \cD(\sw) + \cC(\sw,\sy_{i-1}).
}
The diagonally incremental recursion that corresponds to the first step in \eqref{AL-1} is then:
\begin{equation}
	\left\{
	\begin{aligned}
	\theta_i &= \sw_{i-1}-\alpha \cP^{-1} \grad \cJ^\star(\sw_{i-1}), \\
	\phi_i &= \theta_i - \alpha \cP^{-1} \grad \cD(\theta_i) = \frac{I_{MN} + \cA\tran}{2}\,\theta_i = \tcA \tran \theta_i, \label{i-2}\\
	\sw_i &= \phi_i \hspace{-0.8mm}-\hspace{-0.8mm} \alpha \cP^{-1} \grad_{\sw} \cC(\phi_i,\sy_{i-1})=\phi_i \hspace{-0.8mm}-\hspace{-0.8mm} \cP^{-1}\cV \sy_{i-1},
	\end{aligned}
	\right.
\end{equation}
%
%
where in the second recursion of \eqref{i-2} we introduced 
\eq{\label{tA}
\tcA \define (I_{MN}+\cA)/2.
}
We know from \eqref{A-eig-val} that the eigenvalues of $\overline{A}$ are positive and lie within the interval $(0,1]$. In \eqref{i-2}, if we substitute the first and second recursions into the third one, and also recall \eqref{zlckj} that $\alpha \cP^{-1}\grad \cJ^\star(\sw_{i-1}) = \cM \grad \cJ^o(\sw_{i-1})$, then we get
\eq{\label{zh239}
	\sw_i = \tcA\tran \Big(\sw_{i-1} \hspace{-0.8mm}-\hspace{-0.8mm} \cM\grad \cJ^o(\sw_{i-1})\Big)\hspace{-0.8mm}-\hspace{-0.8mm}\cP^{-1}\cV \sy_{i-1}.
}
Replacing {\color{black}the first recursion} in \eqref{AL-1} with \eqref{zh239}, the previous primal-dual saddle point recursion \eqref{AL-1} becomes
	\begin{equation}
	\boxed{
	\left\{
	\begin{aligned}
	\sw_i &= \tcA\tran \Big(\sw_{i\hspace{-0.3mm}-\hspace{-0.3mm}1}\hspace{-0.8mm}-\hspace{-0.8mm}\cM \grad \cJ^o(\sw_{i\hspace{-0.3mm}-\hspace{-0.3mm}1})\Big)\hspace{-0.8mm}-\hspace{-0.8mm}\cP^{-1}\cV \sy_{i-1} \label{zn-1}\\
	\sy_i &= \sy_{i-1} + \cV \sw_i
	\end{aligned}
	\right.}
	\end{equation}
Recursion \eqref{zn-1} is the primal-dual form of the exact diffusion recursion we are seeking. For the initialization, we set $y_{-1}=0$ and $\sw_{-1}$ to be any value, and hence for $i=0$ we have
\begin{equation}
\left\{
\begin{aligned}
\sw_0 &= \tcA\tran \Big(\sw_{\hspace{-0.3mm}-\hspace{-0.3mm}1}\hspace{-0.8mm}-\hspace{-0.8mm}\cM \grad \cJ^o(\sw_{\hspace{-0.3mm}-\hspace{-0.3mm}1})\Big), \label{zn-0}\\
\sy_0 &= \cV \sw_0. 
\end{aligned}
\right.
\end{equation}
 We can rewrite \eqref{zn-1} in a simpler form by eliminating the dual variable ${\sy}$ from the first recursion. For $i=1,2,\cdots,$ from \eqref{zn-1} we have
\eq{
	&\ \sw_i - \sw_{i-1} \nnb
	=&\ \tcA\tran\Big(\sw_{i-1}\hspace{-0.8mm}-\hspace{-0.8mm}\sw_{i-2} \hspace{-0.8mm}-\hspace{-0.8mm} \cM \big(\grad \cJ^o(\sw_{i-1}) \hspace{-0.8mm}-\hspace{-0.8mm} \grad \cJ^o(\sw_{i-2})\big) \Big) \nnb
	&\ -\cP^{-1}\cV (\sy_{i-1}-\sy_{i-2}). \label{znn9}
}
From the second step in \eqref{zn-1} we have 
\eq{\label{xn678}
&\ \cP^{-1}\cV(\sy_{i-1}-\sy_{i-2}) = \cP^{-1}\cV^2 \sw_{i-1} \nnb
\overset{\eqref{P-AP=VTV}}{=}&\ \cP^{-1}\left( \frac{\cP-\cP\cA\tran}{2} \right)\sw_{i-1} = \left(\frac{I_{MN} - \cA\tran}{2}\right)\sw_{i-1}.
}
Substituting \eqref{xn678} into \eqref{znn9}, we arrive at
\eq{\label{e-d}
	\boxed{
	\sw_i \hspace{-1mm}=\hspace{-1mm} \tcA\tran \Big(\hspace{-0.8mm} 2 \sw_{i\hspace{-0.3mm}-\hspace{-0.3mm}1} \hspace{-0.8mm}-\hspace{-0.8mm} \sw_{i\hspace{-0.3mm}-\hspace{-0.3mm}2} \hspace{-1mm}-\hspace{-1mm} \cM \big(\grad \cJ^o(\hspace{-0.3mm}\sw_{i\hspace{-0.3mm}-\hspace{-0.3mm}1}\hspace{-0.5mm}) \hspace{-0.8mm}-\hspace{-0.8mm} \grad \cJ^o(\sw_{i\hspace{-0.3mm}-\hspace{-0.3mm}2}) \big) \hspace{-0.8mm}\Big)}
}
Recursion \eqref{e-d} is the primal version of the exact diffusion.

We can rewrite \eqref{e-d} in a distributed form that resembles \eqref{d-1}--\eqref{d-2} more closely, as listed below in Algorithm 1, where we denote the entries of $\overline{A}$ by $\overline{a}_{\ell k}$. It is observed in Algorithm 1 that the exact diffusion strategy resembles \eqref{d-1}--\eqref{d-2} to great extent, with the addition of a ``correction'' step between the adaptation and combination step. In the correction step, the intermediate estimate $\psi_{k,i}$ is ``corrected" by removing from it the difference between $w_{k,i-1}$ and $\psi_{k,i-1}$ from the previous iteration. Moreover, it is also observed that the exact and standard diffusion strategies have essentially the same computational complexity, apart from $2M$ ($M$ is the dimension of $w_{k,i}$) additional additions per agent in the correction step of the exact implementation. {\color{black}Also, there is one combination step in each iteration, which reduces the communication cost by about one half in comparison to recent DIGing-based works \cite{lorenzo2016next, nedich2016achieving, qu2017harnessing, xu2015augmented, nedic2016geometrically}.

\begin{table}
	\noindent \HRule\\
	\noindent \textbf{\footnotesize Algorithm 1} {\footnotesize (Exact diffusion strategy for agent $k$)} \vspace{-2mm}\\ 
	\HRule\\
	\noindent \textbf{\vspace{0mm}{\scriptsize Setting:} 
	}
	{\scriptsize Let $\tA=(I_{N}+A)/2$, and $w_{k,\hspace{-0.2mm}-\hspace{-0.3mm}1}$ arbitrary. {\color{black}Set $\psi_{k,-1} = w_{k,-1}$.}}
	\noindent \textbf{\vspace{0mm}{\scriptsize {\color{white}Setting:}} 
	}
	{\scriptsize {\color{black}Let $\mu_k = q_k \mu_o/p_k$}. }

	\noindent 
		\textbf{\vspace{-4mm} \hspace{-1.3mm}{\scriptsize Repeat for $i=0,1,2,\cdots$}}\\
		{\footnotesize
			\eq{
				\psi_{k,i} &= w_{k,i-1} - \mu_k \grad J_k(w_{k,i-1}), \hspace{5mm} \mbox{\footnotesize (adaptation)} \label{adapt}\\
				\phi_{k,i} &= \psi_{k,i} + w_{k,i-1}  - \psi_{k,i-1}, \hspace{8mm} \mbox{\footnotesize (correction)} \label{correct}\\
				w_{k,i} &= \sum_{\ell\in \cN_k} \overline{a}_{\ell k} \phi_{\ell,i}. \hspace{2.32cm} \mbox{\footnotesize (combination)}  \label{combine}
			}}
			\HRule
			\vspace{-8mm}
		\end{table}

One can directly run Algorithm 1 when the Perron entries $\{p_k\}$ are known beforehand, as explained in Section II-B. When this is not the case, we can blend iteration \eqref{power_iteartion} into the algorithm and modify it as follows.
}

{\color{black}
\begin{table}[h]
	\noindent \HRule\\
	\noindent \textbf{\footnotesize Algorithm 1'} {\footnotesize (Exact diffusion strategy when $p$ is unknown)} \vspace{-2mm}\\ 
	\HRule\\
	\noindent \textbf{\vspace{0mm}{\scriptsize Setting:} 
	}
	{\scriptsize Let $\tA=(I_{N}+A)/2$, and $w_{k,\hspace{-0.2mm}-\hspace{-0.3mm}1}$ arbitrary. {\color{black}Set $\psi_{k,-1} = w_{k,-1}$,}}
	
	{\scriptsize \hspace{1.06cm} and $z_{k,-1} = e_k$.}
	\vspace{1mm}
	
	\noindent 
	\textbf{\vspace{-4mm} \hspace{-1.3mm}{\scriptsize Repeat for $i=0,1,2,\cdots$}}\\
	{\footnotesize
		\eq{
			z_{k,i} &= \sum_{\ell\in \cN_k} \bar{a}_{\ell k} z_{\ell,i-1}, \hspace{2.2cm} \mbox{\footnotesize (power iteration)} \label{power-iteration} \\
			\psi_{k,i} &= w_{k,i-1} - \frac{q_k \mu_o}{z_{k,i}(k)} \grad J_k(w_{k,i-1}), \hspace{1mm} \mbox{\footnotesize (adaptation)} \label{adapt-d}\\
			\phi_{k,i} &= \psi_{k,i} + w_{k,i-1}  - \psi_{k,i-1}, \hspace{10.5mm} \mbox{\footnotesize (correction)} \label{correct-d}\\
			w_{k,i} &= \sum_{\ell\in \cN_k} \overline{a}_{\ell k} \phi_{\ell,i}. \hspace{2.57cm} \mbox{\footnotesize (combination)}  \label{combine-d}
		}}
		\HRule
	\end{table}
}

\section{Significance of Balanced Policies}
\label{sec-general-A}
The stability and convergence properties of the exact diffusion strategy \eqref{adapt}--\eqref{combine}  will be examined in detail in Part II \cite{yuan2017exact2}. There we will show that exact diffusion is guaranteed to converge for all balanced left-stochastic matrices for sufficiently small step-sizes. The local balancing property turns out to be critical in the sense that convergence may or may not occur if we move beyond the set of balanced policies. We can illustrate these possibilities here by means of examples. The two examples discussed in the sequel highlight the importance of having balanced combination policies for exact convergence.

Thus, consider the primal recursion of the exact diffusion algorithm \eqref{e-d}, where $\tcA$ is a general left-stochastic matrix. We subtract $\sw^\star$ from both sides of \eqref{e-d}, to get the error recursion
\eq{\label{xn266}
\twd_i = \tcA\tran \left( 2\twd_{i-1} - \twd_{i-2} {+} \cM \big(\grad \cJ^o(\hspace{-0.3mm}\sw_{i\hspace{-0.3mm}-\hspace{-0.3mm}1}\hspace{-0.5mm}) \hspace{-0.8mm}-\hspace{-0.8mm} \grad \cJ^o(\sw_{i\hspace{-0.3mm}-\hspace{-0.3mm}2}) \right),
}
where $\twd_i = \sw^\star - \sw_i$. 
When $\grad J_k(w)$ is twice-differentiable, we can appeal to the mean-value theorem from Lemma D.1 in \cite{sayed2014adaptation}, which allows us to express each difference
\eq{
	&\hspace{-10mm} \grad J_k(w_{k,i-1}) - \grad J_k(w^\star)  \nnb
	=&\ -\left( \int_0^1 \grad^2 J_k\big(w^\star \hspace{-0.8mm}-\hspace{-0.8mm} r \widetilde{w}_{k,i-1}\big)dr\right)\widetilde{w}_{k,i-1}.
}
If we let 
\eq{\label{H_k_i-1}
	H_{k,i-1} \hspace{-1.5mm}\define \hspace{-1.5mm} \int_0^1 \grad^2 J_k\big(w^\star \hspace{-0.5mm}-\hspace{-0.5mm} r\widetilde{w}_{k,i-1}\big)dr \in \RR^{M\times M},
}
and introduce the block diagonal matrix:
\eq{\label{H_i-1}
	\cH_{i-1} \hspace{-1.5mm}\define \hspace{-1.5mm} \mathrm{diag}\{H_{1,i-1},H_{2,i-1},\cdots,H_{N,i-1}\},
}
then we can rewrite
\eq{
	\grad \cJ^o(\sw_{i-1}) - \grad \cJ^o(\sw^\star) = - \cH_{i-1} \twd_{i-1}.\label{xcnh}
}
Notice that
\eq{\label{cnwh99}
&\hspace{-1cm} \grad  \cJ^o(\hspace{-0.3mm}\sw_{i\hspace{-0.3mm}-\hspace{-0.3mm}1}\hspace{-0.5mm}) \hspace{-0.8mm}-\hspace{-0.8mm} \grad \cJ^o(\sw_{i\hspace{-0.3mm}-\hspace{-0.3mm}2}) \nnb
=&\ \grad \cJ^o(\hspace{-0.3mm}\sw_{i\hspace{-0.3mm}-\hspace{-0.3mm}1}\hspace{-0.5mm}) \hspace{-0.8mm}-\hspace{-0.8mm} \grad \cJ^o(\sw^\star) \hspace{-0.8mm}+\hspace{-0.8mm} \grad \cJ^o(\sw^\star)  \hspace{-0.8mm}-\hspace{-0.8mm}
 \grad \cJ^o(\sw_{i\hspace{-0.3mm}-\hspace{-0.3mm}2})\nnb
 \overset{\eqref{xcnh}}{=}&\ \cH_{i-2}\twd_{i-2} - \cH_{i-1}\twd_{i-1}.
}
Combining \eqref{xn266}, \eqref{cnwh99} and the fact $\twd_{i-1}=\twd_{i-1}$, we have 
\eq{\label{xcn288sss}
\ba{c}
\twd_i\\
\twd_{i-1}
\ea
=
(\cF {-} \cG_{i-1} ) 
\ba{c}
\twd_{i-1}\\
\twd_{i-2}
\ea,
}
where
\eq{\label{cF}
\cF& \hspace{-1mm}\define\hspace{-1mm}
\ba{cc}
2\tcA\tran & -\tcA\tran\\
\cI_{MN} & 0
\ea \in \RR^{2MN \times 2MN},\\
\cG_{i-1}&\hspace{-1mm}\define\hspace{-1mm}
\ba{cc}
\tcA\tran \cM \cH_{i-1} & -\tcA\tran \cM \cH_{i-2} \\
0 & 0
\ea \in \RR^{2MN \times 2MN}. \label{cG}
}

In the next two examples, we consider the simple case where the dimension $M=1$, $q_k=1$ for $k\in\{1, \cdots, N\}$,  and the step-size $\cM$ = $\mu P^{-1}$, where 
\eq{
P = \diag\{p_1, \cdots, p_N\}\in \RR^{N\times N}.
}
In this situation, the matrix $\cF - \cG_{i-1}$ reduces to
\eq{\label{n2g88}
\cF\hspace{-1mm}-\hspace{-1mm}\cG_{i-1} \hspace{-1mm}= \hspace{-1mm}
\ba{cc}
\hspace{-1mm}
\tA\tran (2 I_N \hspace{-0.8mm}-\hspace{-0.8mm} \mu P^{-1} H_{i-1} )\hspace{-0.5mm} & \hspace{-0.5mm}-\tA\tran (I_N \hspace{-0.8mm}-\hspace{-0.8mm} \mu P^{-1} H_{i-2})\hspace{-2mm}\\
\hspace{-1mm}I_N \hspace{-0.5mm}&\hspace{-0.5mm} 0\hspace{-1mm}
\hspace{-2mm}
\ea.
}
{\color{black}Moreover, we also assume $H_i$ is iteration independent, i.e., 
\eq{
H_i = H, \quad \forall\ i = 1, 2, \cdots 
}
This assumption holds for quadratic costs $J_k(w)$. Under the above conditions, we have	
}
\eq{\label{23bb9}
(\cF - \cG_{i-1}) 
\ba{c}
\mathds{1}_N\\
\mathds{1}_N
\ea
=
\ba{c}
\tA\tran \mathds{1}_N\\
\mathds{1}_N
\ea
=
\ba{c}
\mathds{1}_N\\
\mathds{1}_N
\ea,
}
which implies that $\lambda_1 =1$ is one eigenvalue of $\cF - \cG_{i-1}$ no matter what the step-size $\mu$ is. However, since $\sw_0$ is initialized as $\cV \sy_0$ and, hence, lies in $\mathrm{range}(\cV)$, the eigenvalue $\lambda_1=1$ will not influence the convergence of recursion \eqref{xcn288sss} (the detailed explanation is spelled out in Sections II and III of Part II\cite{yuan2017exact2}). Let $\{\lambda_k\}_{k=2}^{2N}$ denote the remaining eigenvalues of $\cF-\cG_{i-1}$, and introduce
\eq{
\rho(\cF - \cG_{i-1}) \define \max\{ |\lambda_2|,|\lambda_3|,\cdots,|\lambda_{2N}| \}.
} 
It is $\rho(\cF - \cG_{i-1})$ that determines the convergence of recursion \eqref{xcn288sss}: the exact diffusion recursion \eqref{xcn288sss} will diverge if $\rho(\cF - \cG_{i-1}) >1$, and will converge if $\rho(\cF - \cG_{i-1}) < 1$.

\vspace{1mm}
{\color{black}
\noindent \textbf{Example 1 (Diverging case).} Consider the following left-stochastic matrix $A$:
\eq{\label{A-diverging}
A =
\ba{cccc}
0 & 0 & 0 & 1 \\
0 & 0.5 & 0.5 & 0\\
1 & 0 & 0.5 & 0\\
0 & 0.5 & 0 & 0
\ea.
}
It can be verified that $A$ is primitive, left-stochastic but not balanced. For such $A$, its Perron eigenvector $p$ can be calculated in advance, and hence $P$ is also known. 
Also, $H_{i-1}$ is assumed to satisfy
\eq{\label{H-diverging}
P^{-1} H_{i-1} = \diag\{20, 1, 1, 1\} \in \RR^{4 \times 4}
}
{Substituting the above $A$ and $PH_{i-1}$ into $\cF-\cG_{i-1}$ shown in \eqref{n2g88}, it can be verified that
\eq{\label{xcn23h}
	\rho(\cF - \cG_{i-1}) > 1
}
for {\em any} step-size $\mu > 0$. The proof is given in Appendix \ref{app-unstable-A} by appealing to the Jury test for stability. In the top plot in Fig. \ref{fig:general_A}, we show the spectral radius $\rho(\cF-\cG_{i-1})$ for step-sizes $\mu\in[1e^{-6},3]$. It is observed that $\rho(\cF-\cG_{i-1})>1$.}

{\color{black}By following similar arguments, we can find a counter example such that EXTRA will also diverge for any step-size $\mu > 0$, even if we assume the Perron eigenvector $p$ is known in advance. For example, if
\eq{\label{A-diverging-extra}
	A =
	\ba{ccccc}
	0.36 & 0.99 & 0 & 0 & 0 \\
	0 & 0.01 & 0 & 0.6 & 0\\
	0 & 0 & 0.02 & 0 & 0.95\\
	0 & 0 & 0.98 & 0.4 & 0\\
	0.64 & 0 & 0 & 0 & 0.05
	\ea \in \RR^{5\times 5}
}
and 
\eq{\label{H-diverging-extra}
	P^{-1} H_{i-1} = \diag\{20, 1, 1, 1, 1\} \in \RR^{5\times 5},
}
one can verify that EXTRA will diverge for any $\mu>0$ by following the arguments in Appendix \ref{app-unstable-A}. As a result, the push-sum based algorithms \cite{zeng2015extrapush,xi2015linear} that extend EXTRA to non-symmetric networks cannot always converge. This example indicates that the stability range $(c_{\rm lower}, c_{\rm upper})$ provided in \cite{zeng2015extrapush,xi2015linear} may not always be feasible. \\
\rightline \qed
}


}
\vspace{1mm}
\noindent \textbf{Example 2 (Converging case).} Consider the following left-stochastic matrix $A$:
\eq{\label{A-converging}
	A =
	\ba{ccccc}
	0.3 & 0.6 & 0.2 & 0 & 0\\
	0.2 & 0.2 & 0 & 0.3 & 0\\
	0.1 & 0.1 & 0.5 & 0.3 & 0.2\\
	0 & 0.1 & 0.3 & 0.4 & 0.1\\
	0.4 & 0 & 0 & 0 & 0.7
	\ea.
}
It can be verified that $A$ is primitive and not balanced. Also, $H_{i-1}$ is assumed to satisfy
\eq{\label{H-converging}
	P^{-1} H_{i-1}=\mathrm{diag}\{10, 10, 10, 10, 10\} \in \RR^{5\times 5}.
}
Substituting the above $A$ and $P^{-1} H_{i-1}$ into \eqref{n2g88}, {\color{black}it can be verified that $\rho(\cF) = 0.9923$. Therefore, when $\mu$ is sufficiently small, $\cF$ will dominate in $\cF - \cG_{i-1}$ and $\rho(\cF - \cG_{i-1}) < 1$. The simulations in Fig. \ref{fig:general_A_converging} confirm this fact. In particular, it is observed that $\rho(\cF - \cG_{i-1}) < 1$ when $\mu < 0.2$. As a result, the exact diffusion will converge when $\mu<0.2$ under this setting. }
\newline
\rightline \qed

{
\section{Numerical Experiments}\label{sec-simulation}
In this section we illustrate the performance of the proposed exact diffusion algorithm. In all figures, the $y$-axis indicates the relative error, i.e.,  $\|\sw_i-\sw^o\|^2/\|\sw_0-\sw^o\|^2$, where $\sw_i=\col\{w_{1,i},\cdots,w_{N,i}\}\in \RR^{NM}$ and $\sw^o=\col\{w^o,\cdots,w^o\}\in \RR^{NM}$. 

\subsection{Distributed Least-squares}\label{subsec:expe-ls}
In this experiment, we focus on solving the least-squares problem over the network shown in \ref{fig:networktopology}:
\eq{\label{prob-ls}
w^o = \argmin_{w\in \RR^M}\quad \frac{1}{2}\sum_{k=1}^{N} \|U_k w - d_k\|^2.
}
where the network size $N=20$ and the dimension $M=30$. Each entry in both $U_k\in \RR^{50\times 30}$ and $d_k\in \RR^{50}$ is generated from the standard Gaussian distribution $\cN(0,1)$. 
\begin{figure}
\centering
\includegraphics[scale=0.4]{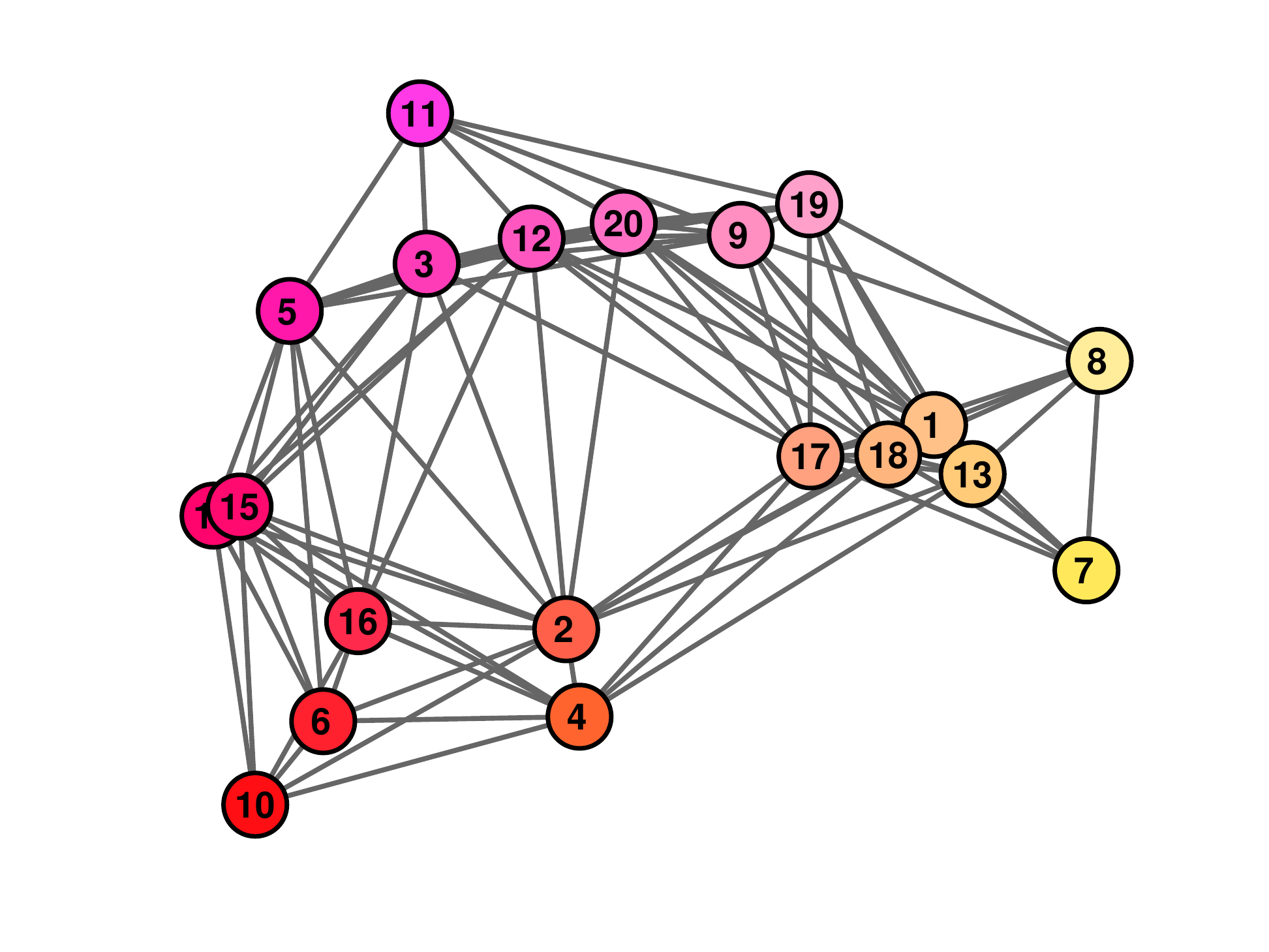}
\caption{\footnotesize Network topology used in the simulations.}
\label{fig:networktopology}
\end{figure}

We compare the convergence behavior of standard diffusion and the exact diffusion algorithm in the simulation. The left-stochastic matrix $A$ is generated through the averaging rule (see \eqref{a-lk-ave}), and each agent $k$ employs step-size $\mu_k=\mu_o/n_k$ (see \eqref{mu-ave}) where $\mu_o$ is a small constant step-size. The convergence of both algorithms is shown in Fig. \ref{fig:CS}, where we set $\mu_o=0.01$. It is observed that the standard diffusion algorithm converges to a neighborhood of $w^o$ on the order $O(\mu_o^2)$, while the exact diffusion converges exponentially fast to the exact solution $w^o$. This figure confirms that exact diffusion corrects the bias in standard diffusion.
\begin{figure}
	\centering
	\includegraphics[scale=0.4]{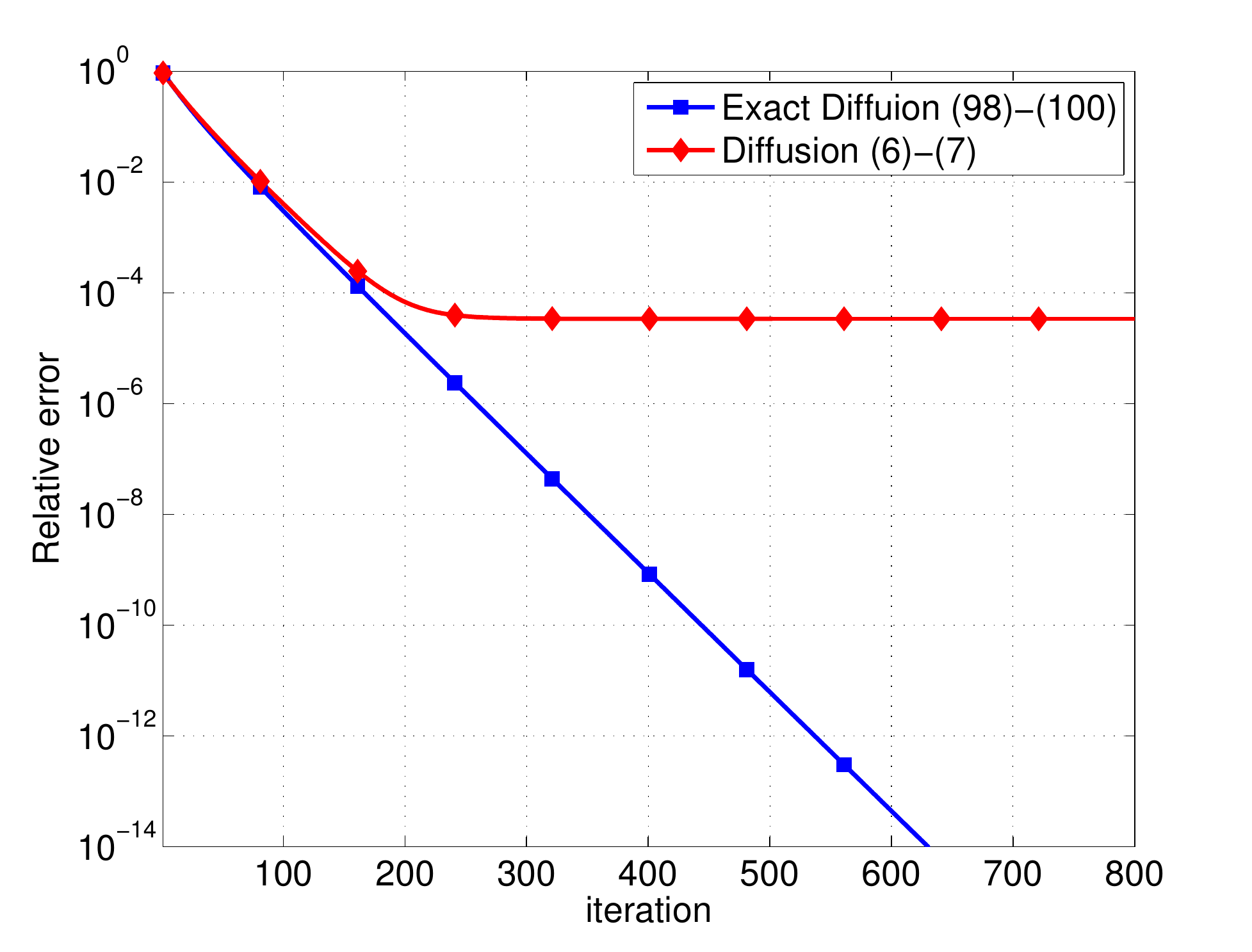}
	\caption{\footnotesize Convergence comparison between standard diffusion and exact diffusion for the distributed least-squares \eqref{prob-ls}.}
	\label{fig:CS}
\end{figure}

}

\subsection{Distributed Logistic Regression}\label{sec-dlr}
We next consider a pattern classification scenario. Each agent $k$ holds local data samples $\{h_{k,j}, \gamma_{k,j}\}_{j=1}^L$, where $h_{k,j}\in \RR^M$ is a feature vector and $\gamma_{k,j}\in \{-1,+1\}$ is the corresponding label. Moreover, the value $L$ is the number of local samples at each agent. All agents will cooperatively solve the regularized logistic regression problem over the network in Fig. \ref{fig:networktopology}:
\eq{\label{prob-lr}
w^o=\argmin_{w\in \RR^M} \sum_{k=1}^{N} \Big[\frac{1}{L}\sum_{\ell=1}^{L}\ln\big(1\hspace{-1mm}+\hspace{-1mm}\exp(-\gamma_{k,\ell} h_{k,\ell}\tran w)\big) \hspace{-1mm}+\hspace{-1mm} \frac{\rho}{2}\|w\|^2 \Big].
}
In the experiments, we set $N=20$, $M=30$, and $L=50$. For local data samples $\{h_{k,j}, \gamma_{k,j}\}_{j=1}^L$ at agent $k$, each
$h_{k,j}$ is generated from the standard normal distribution $\cN(0; 10I_M)$. To generate $\gamma_{k,j}$, we first generate an auxiliary random vector $w_0\in \RR^{M}$ with each entry following $\cN(0,1)$. Next, we generate $\gamma_{k,j}$ from a uniform distribution $\cU(0,1)$. If $\gamma_{k,j} \le 1/[1+\exp(-(h_{k,j})\tran w_0)]$ then $\gamma_{k,j}$ is set as $+1$; otherwise $\gamma_{k,j}$ is set as $-1$. We set $\rho=0.1$. 

We still compare the convergence behavior of the standard diffusion and exact diffusion. The left-stochastic matrix $A$ is generated through the averaging rule, and each agent $k$ employs step-size $\mu_k=\mu_o/n_k$. The convergence of both algorithms is shown in Fig. \ref{fig:lr-1}. The step-size $\mu_o=0.05$. It is also observed that the exact diffusion corrects the bias in standard diffusion.
\begin{figure}
	\centering
	\includegraphics[scale=0.4]{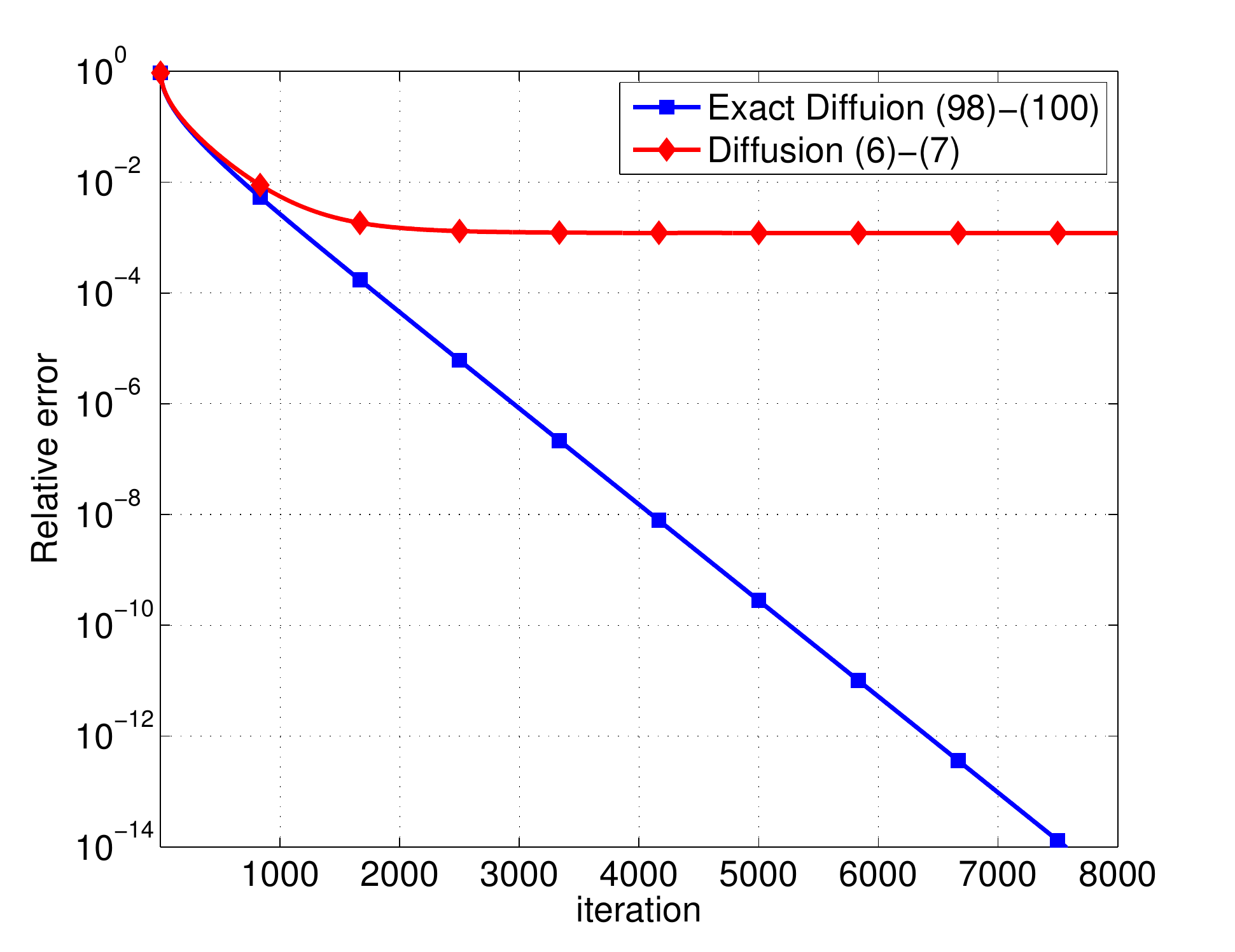}
	\caption{\footnotesize Convergence comparison between standard diffusion and exact diffusion for distributed logistic regression \eqref{prob-lr}.}
	\label{fig:lr-1}
\end{figure}


{\color{black}
\subsection{Averaging Rule v.s. Doubly Stochastic Rules}\label{sec-average-vs-ds}
In this subsection we test the convergence performance of exact diffusion under different combination matrices. Consider a network with a highly unbalanced topology as shown in Fig. \ref{fig:unbalanced}. Nodes $1$ and $2$ are ``celebrities'' with many neighbors, while the other $18$ nodes just have two neighbors each. Such a network topology is quite common over social networks. 

Interestingly, both the maximum degree rule and the Metropolis rule will generate the same doubly-stochastic combination matrix for this network. Let $L$ be the Laplacian matrix associated with that network, then the generated doubly-stochastic combination matrix is  
\eq{
A = I - L/19. 
}
This combination matrix $A$ merges information just slightly better than the identity matrix $I$ because the term $L/19$ is quite small, which is not efficient. In contrast, the normal agent $k$ (where $3\le k\le 20$) will assign $1/3$ to incoming information from agents $1$ and $2$ if the averaging rule is used, which combines information more efficiently and hence leads to faster convergence. In Fig. \ref{fig:highly_unbalanced}, we compare these two combination matrices over the distributed least-square problem \eqref{prob-ls}. The step-sizes are carefully chosen such that each combination matrix reaches its fastest convergence. As expected, it is observed that the averaging rule is almost three times faster than the doubly-stochastic rule.

\begin{figure}
	\centering
	\includegraphics[scale=0.55]{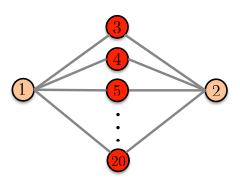}
	\caption{\footnotesize A highly unbalanced network topology.}
	\label{fig:unbalanced}
\end{figure}

\begin{figure}
	\centering
	\includegraphics[scale=0.4]{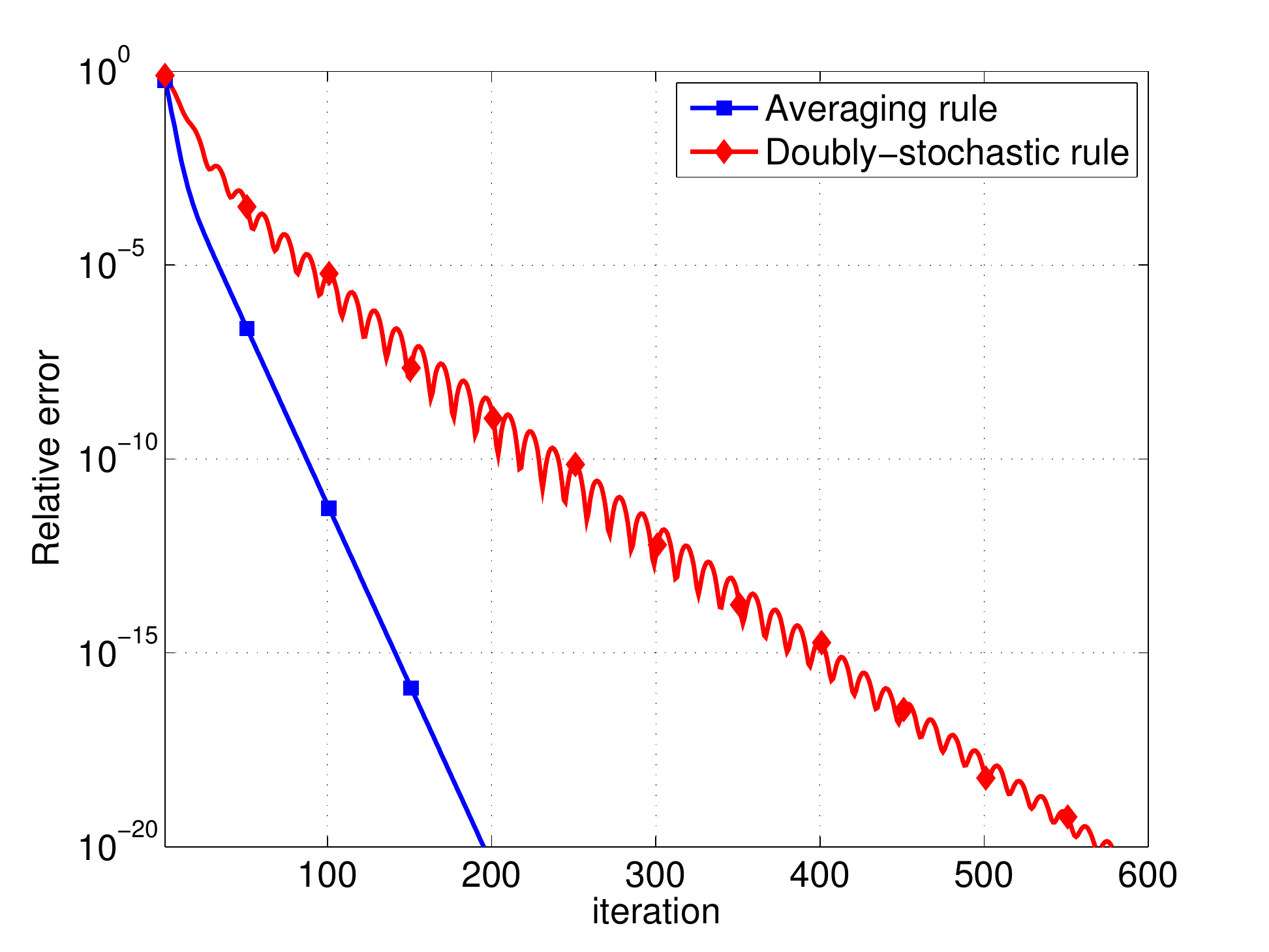}
	\caption{\footnotesize Convergence comparison between averaging rule and doubly-stochastic rule for distributed least-squares \eqref{prob-ls}.}
	\label{fig:highly_unbalanced}
\end{figure}

%
%
}

{
\subsection{Exact Diffusion for General Left-Stochastic $A$}\label{subsec-simulation-general-A}
In this subsection we test exact diffusion for the general left-stochastic $A$ shown in Section \ref{sec-general-A}. 
	In Fig. \ref{fig:general_A} we test the setting of Example $1$ in which $A$ is in the form of \eqref{A-diverging} and $H$ is \eqref{H-diverging}. We introduce $\rho= \rho(\cF-\cG_{i-1})$. In the top plot, we illustrate how $\rho$ varies with step-size $\mu$. {In this plot, the step-size varies over $[10^{-6}, 3]$, and the interval between two consecutive $\mu$ is $10^{-6}$. It is observed that $\rho>1$ for any $\mu\in [10^{-6}, 3]$, which confirms with our conclusion that exact diffusion will diverge for any step-size $\mu$ under the setting in Example $1$.} In the bottom plot of Fig. \ref{fig:general_A} we illustrate the standard diffusion converges to a neighborhood of $w^o$ on the order of $O(\mu^2)$ for $\mu=0.01$, while the exact diffusion diverges.
	
	 In Fig. \ref{fig:general_A_converging} we test the setting of Example $2$ in which $A$ is in the form of \eqref{A-converging} and $H$ is of \eqref{H-converging}. In the top plot, we illustrate how $\rho$ varies with $\mu$. It is observed that $\rho < 1$ when $\mu<0.2$, which implies that the exact diffusion recursion \eqref{xcn288sss} will converge when $\mu<0.2$. In the bottom figure, with $\mu=0.001$ it is observed that exact diffusion will converge exactly to $w^o$. Figures. \ref{fig:general_A} and \ref{fig:general_A_converging} confirm that general left-stochastic $A$ cannot always guarantee convergence to $w^o$.}	

\begin{figure}
	\centering
	\includegraphics[scale=0.38]{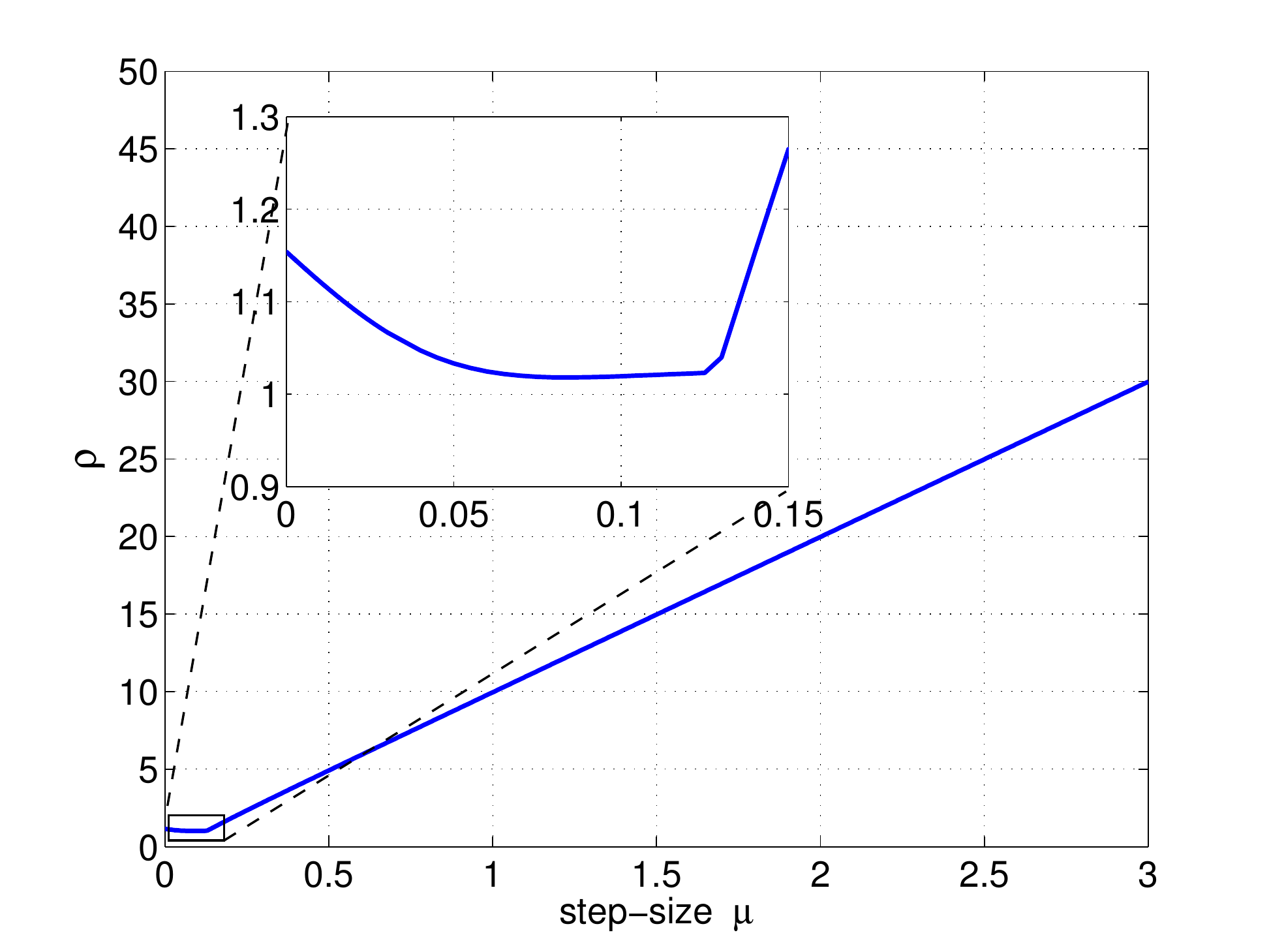}
	\includegraphics[scale=0.38]{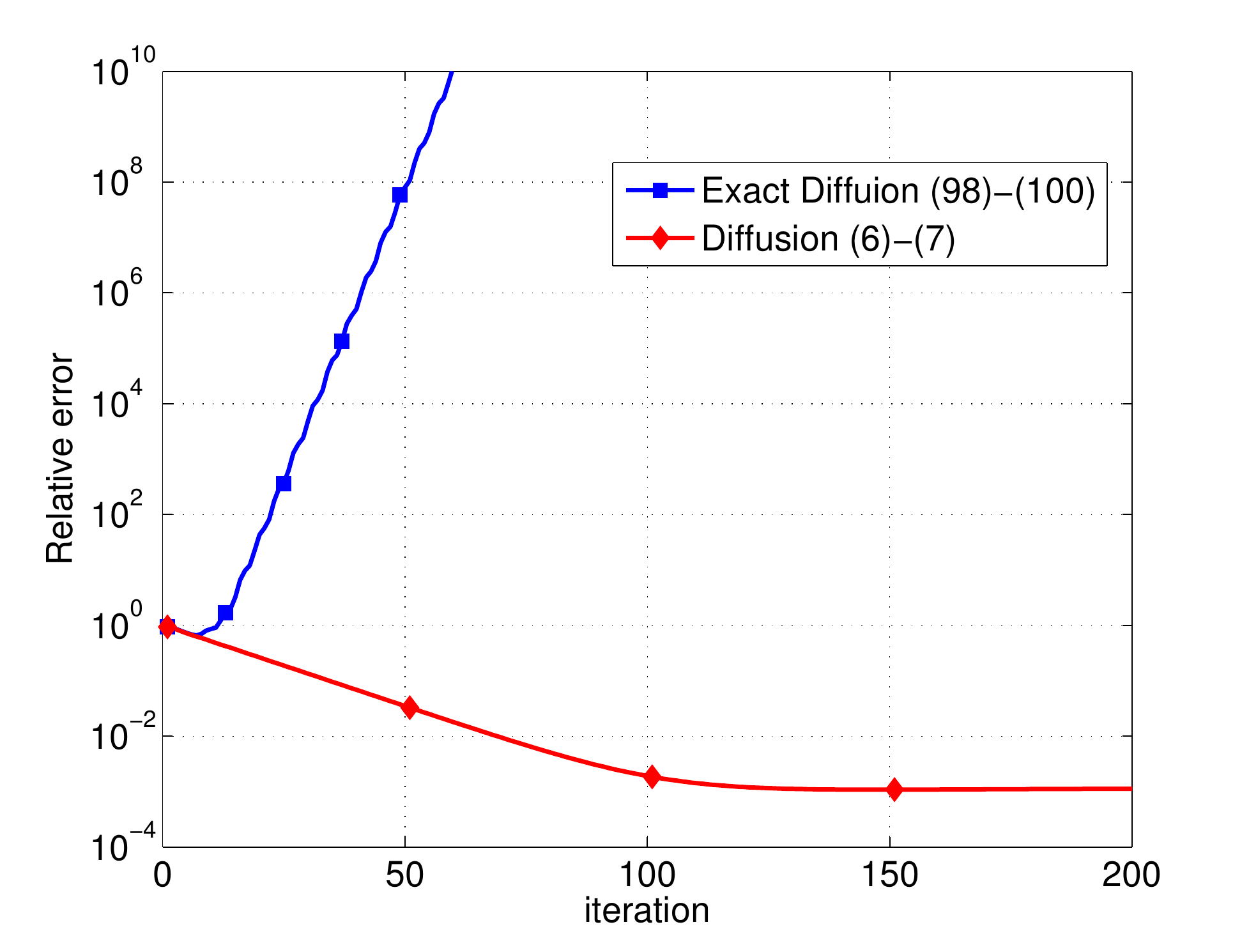}
	\vspace{-3mm}
	\caption{\footnotesize {Exact diffusion under the setting of Example 1 in Section \ref{sec-general-A}. Top: $\rho>1$ no matter what value $\mu$ is. Bottom: Convergence comparison between diffusion and exact diffusion when $\mu=0.01$.
		}}
		\label{fig:general_A}
	\end{figure}
	
	\begin{figure}
		\centering
		\includegraphics[scale=0.38]{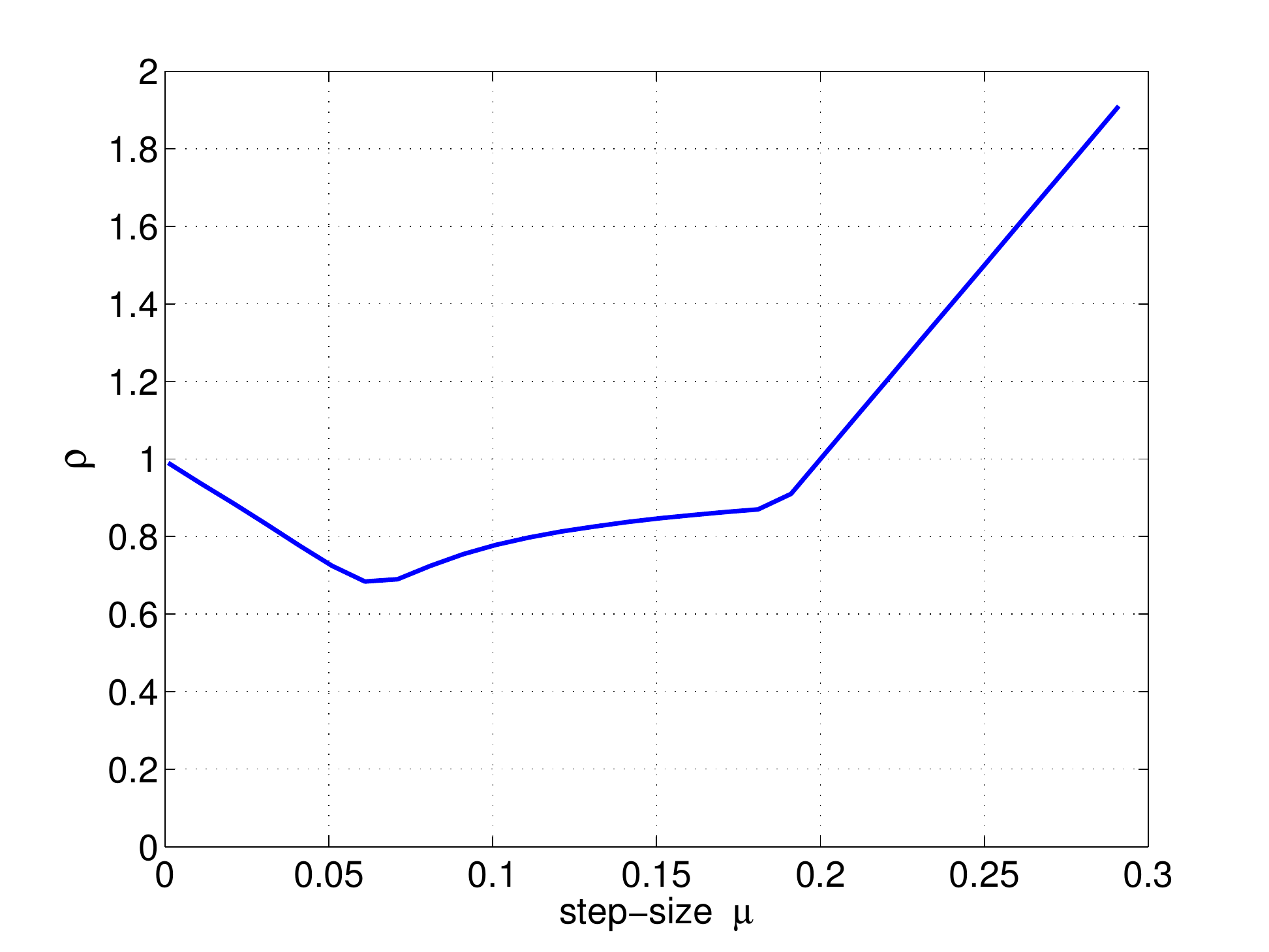}
		\includegraphics[scale=0.38]{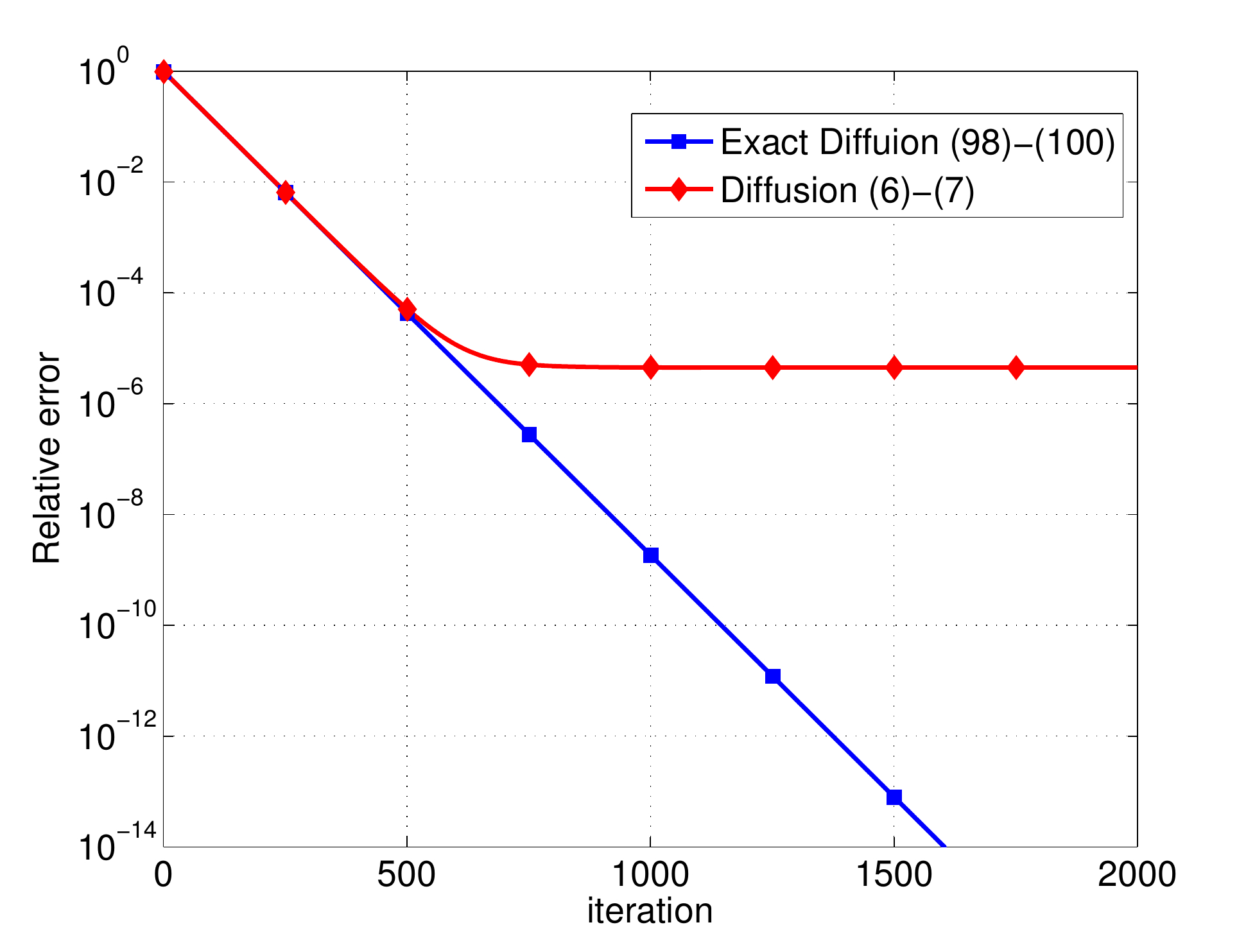}
		\vspace{-3mm}
		\caption{\footnotesize {Exact diffusion under the setting of Example 2 in Section \ref{sec-general-A}. Top: $\rho<1$ when $\mu<0.2$. Bottom: Convergence comparison between standard diffusion and exact diffusion when $\mu=0.001$. 
			}}
			\label{fig:general_A_converging}
		\end{figure}
		
\section{CONCLUDING REMARKS}
This work developed a diffusion optimization
strategy with guaranteed exact convergence for a broad
class of combination policies. The strategy  is applicable to non-symmetric
left-stochastic combination matrices, while many earlier
developments on exact consensus implementations are
limited to doubly-stochastic matrices or right-stochastic matrices; these latter matrices
impose stringent constraints on the network topology. Part II\cite{yuan2017exact2} of this work establishes analytically, and by means of examples and simulations, the superior convergence and stability properties of exact diffusion implementations.\\

\appendices

\section{Proof of \eqref{xcn23h}}
\label{app-unstable-A}	
		
\noindent {The characteristic polynomial of $\cF-\cG_{i-1}$ is given by
	\eq{\label{c-p}
		Q(\lambda) = (\lambda-1)D(\lambda),\quad \mbox{where} \quad D(\lambda) = \sum_{k=0}^{7} a_{k} \lambda^{k}
	} 
	and 
	\eq{
		a_7 &= 32, \quad
		a_6 = 384\mu - 128,\quad
		a_5 = 682\mu^2 \hspace{-0.8mm}-\hspace{-0.8mm} 1512\mu \hspace{-0.8mm}+\hspace{-0.8mm} 248, \nnb
		a_4 &= 429\mu^3 - 2458\mu^2 + 2712\mu - 288,\nnb
		a_3 &= 80\mu^4 - 1346\mu^3 + 3672\mu^2 - 2692\mu + 210,
	}
	\eq{
		a_2 &= - 240\mu^4 + 1649\mu^3 - 2904\mu^2 + 1593\mu - 98,\nnb
		a_1 &= 240\mu^4 - 976\mu^3 + 1260\mu^2 - 552\mu + 28, \nnb
		a_0 &= - 80\mu^4 + 244\mu^3 - 252\mu^2 + 92\mu - 4.
	}
	It is easy to observe from \eqref{c-p} that $\lambda=1$ is one eigenvalue of $\cF-\cG_{i-1}$. As mentioned in \eqref{23bb9} and its following paragraph, this eigenvalue $\lambda=1$ does not influence the convergence of recursion \eqref{xcn288sss} because of the initial conditions. It is the roots of $D(\lambda)$ that decide the convergence of the exact diffusion recursion \eqref{xcn288sss}. Now we will prove that there always exists some root that stays outside the unit-circle no matter what the step-size $\mu$ is. In other words, $D(\lambda)$ is not stable for any $\mu$.
	
	Since $D(\lambda)$ is a $7$-th order polynomial, its roots are not easy to calculate directly. Instead, we apply the Jury stability criterion \cite{jury1962simplified} to decide whether $D(\lambda)$ has roots outside  the unit-circle. First we construct the Jury table as shown in Fig. \ref{fig:jury-table}, where 
	\eq{
		b_k &= 
		\left|
		\begin{array}{cc}
			a_0 & a_{7-k} \\
			a_7 & a_k
		\end{array}
		\right| = a_0 a_k - a_7 a_{7-k},\ k = 0,\cdots, 6 \\
		c_k &= 
		\left|
		\begin{array}{cc}
			b_0 & b_{6-k} \\
			b_6 & b_k
		\end{array}
		\right| = b_0 b_k - b_6 b_{6-k},\ k = 0,\cdots, 5 \\
		\vdots \nnb
		f_k &= 
		\left|
		\begin{array}{cc}
			e_0 & e_{3-k} \\
			e_3 & e_k
		\end{array}
		\right| = e_0 e_k - e_3 e_{3-k},\ k = 0,\cdots, 2.
	}
	According to the Jury stability criterion, $D(\lambda)$ is stable (i.e., all roots of $D(\lambda)$ are within the unit-circle) if, and only if, the following conditions hold:
	\eq{
		& D(1) > 0,\quad (-1)^7D(-1)>0, \quad |a_0| < a_7, \quad |b_0| > |b_6|\nnb
		& |c_0| > |c_5|,\quad |d_0| > |d_4|, \quad |e_0| > |e_3|,\quad |f_0| > |f_2|.
	}
	If any one of the above conditions is violated, $D(\lambda)$ is not stable. Next we check each of the conditions:
	
	\vspace{1mm}
	\noindent (1) $D(1)>0$ is satisfied for any $\mu>0$ since 
	\eq{\label{xn3h9}
		D(1) = \sum_{k=0}^{7}a_k = 25 \mu > 0.
	}
	
	\vspace{1mm}
	\noindent (2) $(-1)^7D(-1)>0$. To guarantee this condition, we need to require that
	\eq{
		&\ (-1)^7 D(-1) \nnb
		=&\ 640\mu^4 - 4644\mu^3 + 11228\mu^2 - 9537\mu + 1036 > 0.
	}
	With the help of Matlab, we can verify that
	\eq{\label{b2v9}
	 (-1)^7 D(-1) > 0\ \mbox{ when } \mu<0.1265 \mbox{ or } \mu > 3.0410. 
	}
	
	\vspace{1mm}
	\noindent (3) $|a_0|<a_7$. To guarantee this condition, we need
	\eq{
		|\hspace{-1mm}-\hspace{-0.8mm}80\mu^4 + 244\mu^3 - 252\mu^2 + 92\mu - 4| < 32,
	}
	which is equivalent to requiring 
	\eq{
		- 0.1884 < \mu < 1.6323. \label{2bnn9}
	}
	With \eqref{xn3h9}, \eqref{b2v9} and \eqref{2bnn9}, we conclude that when 
	\eq{\label{2nbs9}
		0<\mu < 0.1265,
	}
	conditions (1), (2) and (3) will be satisfied simultaneously. Moreover, with the help of Matlab, we can also verify that the step-size range \eqref{2nbs9} will also meet conditions (4) $|b_0|>|b_6|$, (5) $|c_0|>|c_5|$ and (6) $|d_0|>|d_4|$. Now we check the last two conditions.
	\begin{figure}
		\centering
		\includegraphics[scale=0.4]{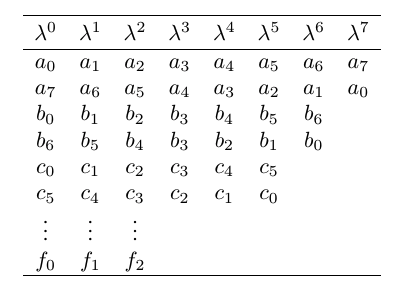}
		\caption{\footnotesize The Jury table for the $7$-th order system.}
		\label{fig:jury-table}
		\vspace{-5mm}
	\end{figure} 
	
	\vspace{1mm}
	\noindent (7) $|e_0|>|e_3|$. To guarantee this condition, the step-size $\mu$ is required to satisfy
	\eq{\label{23nd8}
		0.0438 < \mu < 0.1265.
	}
	
	\vspace{1mm}
	\noindent (8) $|f_0|>|f_2|$. To guarantee this condition, the step-size $\mu$ is required to satisfy
	\eq{\label{e99}
		0 < \mu < 0.0412.
	}
	Comparing \eqref{2nbs9}, \eqref{23nd8} and \eqref{e99}, it is observed that the intersection of these three ranges is empty, which implies that there does not exist a value for $\mu$ that makes all conditions (1)--(8) hold. Therefore, we conclude that $D(\lambda)$ is not stable for any step-size $\mu$. 
}

\bibliographystyle{IEEEbib}
\bibliography{reference}
\end{document}